\newcommand{\adots}{\mbox{\setlength{\unitlength}{1pt}
                          \begin{picture}(8,7)\put(-4,1){.}\put(0,4){.}
                          \put(4,7){.}\end{picture}}}
\newcommand {\eq} [1] {\begin{equation}\label{#1}}
\newcommand {\en} {\end{equation}}
\newcommand {\eqn}  {\begin{eqnarray}}
\newcommand {\enn} {\end{eqnarray}}
\newcommand {\bstar}  {\begin{eqnarray*}}
\newcommand {\estar} {\end{eqnarray*}}
\newcommand {\I} {{\bf I}}
\newcommand {\U} {{\bf U}}
\newcommand {\V} {{\bf V}}
\newcommand {\W} {{\bf W}}
\newcommand {\X} {{\bf X}}
\newcommand {\Y} {{\bf Y}}
\newcommand {\Z} {{\bf Z}}
\newcommand {\nrm} [1] {|\!| #1 |\!|}
\newcommand {\mat} [1] {\left[\begin{array}{#1}}
\newcommand {\rix}     {\end{array}\right]}
\newtheorem{example}           {Example}
\newcommand {\range} {\mathop{\rm range}\nolimits}
\newcommand {\wh} {\widehat}
\newcommand {\wt} {\widetilde}
\newcommand {\re} {\mathop{\rm Re}\nolimits}
\newcommand{\ii}    {{\imath}}
\begin{document}
\title{Eigenstructure perturbations for a class of Hamiltonian matrices and solutions of related Riccati inequalities}
\author{Volker Mehrmann \footnotemark[1]
\and Hongguo Xu  \footnotemark[2]
}
\date{}
\maketitle
\renewcommand{\thefootnote}{\fnsymbol{footnote}}
\footnotetext[1]
{Institut f\"ur Mathematik, MA 4-5,  TU Berlin, Str. des 17. Juni 136,
D-10623 Berlin, FRG.
\texttt{mehrmann@math.tu-berlin.de}.
}

\footnotetext[2]{ Department of Mathematics, University of Kansas, Lawrence, KS 44045, USA.
\texttt{feng@ku.edu} }
\renewcommand{\thefootnote}{\arabic{footnote}}
\begin{abstract}
The characterization of the solution set for a class of algebraic Riccati inequalities is studied. This class arises in the passivity analysis of linear time invariant control systems. Eigenvalue perturbation theory for the Hamiltonian matrix associated with the Riccati inequality is used to analyze the extremal points of the solution set.

\end{abstract}
\noindent
{\bf Keywords} %
Kalman-Yacubovich-Popov-inequality, Riccati equation, Riccati inequality, Hamiltonian matrix, Lagrangian invariant subspace, eigenvalue perturbations.

\noindent
{\bf AMS subject classification.} 65F15, 65H10, 93D15
\section{Introduction}
We study the {set of Hermitian solutions} of a class of algebraic Riccati inequalities (ARI)
\begin{equation}\label{ari}
F^H X+XF+XGX+K\le 0,
\end{equation}
as well as the  related
algebraic Riccati equations (ARE) of the form
\begin{equation}\label{are}
F^H X+XF+XGX+K=0.
\end{equation}
We consider the special class of problems with $0\le G=G^H\in\mathbb C^{n\times n}$, $0\le K=K^H\in\mathbb C^{n\times n}$, and $F\in\mathbb C^{n\times n}$, where $\mathbb C^{n\times m}$ denotes the set of complex $n\times m$ matrices and $G >  0$ ($G \geq 0$) denotes that $G$ is positive definite (semidefinite). {The problem is studied in the setting of complex coefficients, but all our results also hold similarly in the real case replacing, if necessary, transformations concerning spectral properties by their real  versions, like e.g. the Schur (or Jordan) form by the real Schur form (or real Jordan form).}

The main goal of this paper is to characterize the set of positive
definite solutions of (\ref{ari}) and (\ref{are}) and we do this by studying the perturbation theory for the eigenstructure of the associated \emph{Hamiltonian matrix}
\begin{equation} \label{hammatrix}
H=\mat{cc} F & G \\ -K & -F^H\rix,
\end{equation}
formed from the coefficients, which satisfies $(JH)^H=JH$ for
$J=\mat{cc}0&I\\-I&0\rix.$

Our study of the solvability of \eqref{ari} and \eqref{are}
is motivated by the passivity analysis of linear time-invariant systems
\begin{equation} \label{GenSys}
\begin{array}{c}
\dot{x}\ =\ A\,x + B\, u,\\[1mm]
y\ =\ C\,x + D\, u,
\end{array}
\end{equation}
with $A\in \mathbb C^{n\times n}$, $B\in\mathbb C^{n\times m}$, $C\in\mathbb C^{m\times n}$, and $D \in\mathbb C^{m\times m}$, as well as { complex functions state $x$, input $u$,  and output $y$.}

A system of the form (\ref{GenSys}) is called \emph{(impedance)
passive}, see e.g. \cite{Wil72b},  if there exists a continuously differentiable \emph{storage function}
${\mathcal H}:\mathbb C^{n}\rightarrow [0,\infty)$ such that the \emph{dissipation inequality}
\begin{equation}  \label{DissipIneq}
 {\mathcal H}(x(t_1))-{\mathcal H}(x(t_0)) \leq \int_{t_{0}}^{t_{1}} {\re({y(t)^H}u(t))}\ dt,
\end{equation}
holds for all admissible inputs $u$.

For {the special case of systems} of the form (\ref{GenSys}) that are \emph{minimal}, i.e. both \emph{controllable} and \emph{observable},
the classical theorem of \cite{Wil72b} characterizes passivity.

\begin{theorem}\label{Willems1972}
Assume that the system (\ref{GenSys}) is minimal.
The matrix inequality
\begin{equation} \label{LMIcond}
\left[\begin{array}{cr} {A^H}X +X A & XB -C^H\\[2mm]
B^H X -C & -(D+D^H)
\end{array}\right]\leq 0
\end{equation}
has a solution $X= X^H> 0$ if and only if (\ref{GenSys}) is a \emph{passive} system. In this  case ${\mathcal H}(x) = \frac12 {x^H} X x$ defines a storage function for (\ref{GenSys}) associated
with the supply rate ${\re({y^H}u)}$, satisfying (\ref{DissipIneq}).

Furthermore, for passive systems  there exist  maximum and minimum Hermitian solutions to (\ref{LMIcond}) satisfying $0<X_{-}\leq X_{+}$ {(in the Loewner half-order of Hermitian matrices)} such that
for all Hermitian solutions $X$ to (\ref{LMIcond}), $X_{-}\leq X\leq X_{+}$.
\end{theorem}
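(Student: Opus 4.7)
The plan is to establish the equivalence by the two standard constructions from Willems' dissipative-systems theory: for the easy direction, differentiate a quadratic storage function along trajectories; for the harder direction, construct extremal storage functions as suprema/infima of energy integrals and exploit minimality to show they are quadratic.

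For the ``LMI $\Rightarrow$ passive'' direction I would set $\mathcal{H}(x) = \tfrac12 x^H X x$, which is non-negative since $X>0$, and compute
\[
\frac{d}{dt}\mathcal{H}(x(t)) - \re(y^H u) = \tfrac12 \begin{bmatrix} x\\ u\end{bmatrix}^H \!\! \begin{bmatrix} A^H X + XA & XB-C^H \\ B^H X-C & -(D+D^H)\end{bmatrix}\!\!\begin{bmatrix} x\\ u\end{bmatrix}\le 0
\]
on every admissible trajectory, using (\ref{GenSys}) for $\dot x$ and $y$. Integrating from $t_0$ to $t_1$ yields (\ref{DissipIneq}), so (\ref{GenSys}) is passive with storage function $\mathcal{H}$.

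For the converse I would introduce, following Willems, the \emph{available storage}
\[
\mathcal{H}_a(x_0)=\sup_{u,\,T\ge 0}\Bigl[-\int_0^T \re(y(t)^H u(t))\,dt\;:\; x(0)=x_0\Bigr],
\]
and the \emph{required supply}
\[
\mathcal{H}_r(x_1)=\inf_{u,\,T\ge 0}\Bigl[\int_{-T}^0 \re(y(t)^H u(t))\,dt\;:\; x(-T)=0,\;x(0)=x_1\Bigr].
\]
Passivity guarantees $\mathcal{H}_a<\infty$ and $\mathcal{H}_a\ge 0$, while controllability (minimality) guarantees $\mathcal{H}_r<\infty$; both functions immediately satisfy the dissipation inequality (\ref{DissipIneq}), and any other non-negative storage function $\mathcal{H}$ is sandwiched $\mathcal{H}_a\le \mathcal{H}\le \mathcal{H}_r$. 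By linearity of (\ref{GenSys}) and homogeneity of the supply rate under $u\mapsto\alpha u$, $x\mapsto \alpha x$, both $\mathcal{H}_a$ and $\mathcal{H}_r$ are homogeneous of degree two; combining this with the parallelogram identity applied to concatenated trajectories shows that they are in fact Hermitian quadratic forms $\tfrac12 x^H X_- x$ and $\tfrac12 x^H X_+ x$. Observability then forces $X_->0$ (a vector in $\ker X_-$ would yield a trajectory with identically zero output and zero available storage, violating observability after combining with controllability), and $0<X_-\le X_+$ follows from $\mathcal{H}_a\le\mathcal{H}_r$.

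It remains to show that $X_\pm$ actually solve the LMI (\ref{LMIcond}) and bracket every other Hermitian solution. For this I would differentiate the dissipation inequality satisfied by $\mathcal{H}_a$ at $t=t_0$ along an arbitrary constant input $u(t)\equiv u_0$ with initial state $x_0$; this yields
\[
\tfrac12\begin{bmatrix} x_0\\ u_0\end{bmatrix}^H\!\!\begin{bmatrix} A^H X_-+X_- A & X_- B-C^H\\ B^H X_- -C & -(D+D^H)\end{bmatrix}\!\!\begin{bmatrix} x_0\\ u_0\end{bmatrix}\le 0
\]
for all $(x_0,u_0)$, which is (\ref{LMIcond}); the same argument applies to $X_+$. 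Finally, any Hermitian $X$ solving (\ref{LMIcond}) gives rise (by the first paragraph) to a storage function $\tfrac12 x^H X x$ that must be non-negative on the reachable and unobservable subspaces, which by minimality forces $X>0$; the sandwich $X_-\le X\le X_+$ then follows from $\mathcal{H}_a\le \tfrac12 x^H X x\le \mathcal{H}_r$. The main obstacle is the regularity step---establishing that the optimal-control value functions $\mathcal{H}_a,\mathcal{H}_r$ are in fact smooth quadratic forms rather than merely non-negative homogeneous-of-degree-two functions---which is exactly where minimality of $(A,B,C)$ enters in an essential way.
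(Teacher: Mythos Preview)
The paper does not prove this theorem at all: it is stated as ``the classical theorem of \cite{Wil72b}'' and simply cited, with no proof environment or argument following the statement. So there is nothing in the paper to compare your proposal against.

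Your outline is essentially the original argument of Willems from the cited reference \cite{Wil72b} (combined with \cite{Wil72a}): the ``LMI $\Rightarrow$ passive'' direction by differentiating the quadratic storage, and the converse via the available storage $\mathcal{H}_a$ and required supply $\mathcal{H}_r$, showing they are quadratic forms $X_-$, $X_+$ which bracket every other solution. The structure is correct. Two places deserve more care if you intend this as a self-contained proof rather than a sketch: (i) the step where $\mathcal{H}_a$ and $\mathcal{H}_r$ are shown to be \emph{quadratic} (not merely homogeneous of degree two) is the crux, and your appeal to ``the parallelogram identity applied to concatenated trajectories'' is the right idea but needs controllability to produce the required concatenations; (ii) your argument that an arbitrary Hermitian LMI solution $X$ must be positive definite is slightly circular as written---the sandwich $\mathcal{H}_a\le \tfrac12 x^HXx\le \mathcal{H}_r$ is what you want to \emph{conclude}, but Willems' bracketing applies to non-negative storage functions, so you first need that $\tfrac12 x^HXx$ dominates $\mathcal{H}_a$ (which follows directly from the dissipation inequality and $\mathcal{H}(0)=0$, without assuming $X\ge 0$) before invoking $\mathcal{H}_a>0$ on nonzero states via observability.
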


{When the system is generated from purely data based interpolation approaches or from model reduction methods, then it typically cannot be guaranteed that the resulting system is minimal or it may happen that the system is close to an unobservable or uncontrollable system, see \cite{BeaMX15_ppt,CheMH19_ppt}. Also the minimality is only a sufficient condition for the existence of a positive definite solution, see \cite{CheGH23} for a detailed characterization. We therefore do not assume minimality and study  the existence and computation of positive definite solutions under weaker assumptions.
}

If the matrix $D+D^H$ in \eqref{LMIcond} is positive definite, then (using Schur complements) $X$ is a positive definite solution of \eqref{LMIcond} if and only if it is a positive definite solution of \eqref{ari} with
\begin{equation}\label{defcoeff}
F:=A-B(D+D^H)^{-1}C^H,\ G:=B(D+D^H)^{-1}B^H,\ K:=C^H(D+D^H)^{-1}C.
\end{equation}
The same argument applies to the equality case  \eqref{are}.

The solvability of \eqref{ari} is well understood, see e.g. \cite{BoyEFB94},
and the same holds for
\eqref{are}, see e.g. \cite{LanR95}. However,  in most cases the study is motivated from linear quadratic optimal control theory \cite{Meh91}, where the quadratic term has a negative sign, while here we study the case where the sign is positive.

A second motivation {for our work} arises from energy based modeling of passive systems via \emph{port-Ha\-mil\-to\-ni\-an (pH)}  systems. See \cite{MehU23} 
for a detailed survey on pH systems covering also the case of descriptor systems.

A linear time invariant \emph{port-Hamiltonian (pH) system} has the form
\begin{equation} \label{pH}
 \begin{array}{rcl} \dot x  & = & (J-R)Q x + (\hat B-\hat P) u,\\
y&=& (\hat B+\hat P)^HQ x+Du,
\end{array}
\end{equation}
with $Q=Q^{H} >0$, $J=-J^H$,
$
0\leq W^H=W:=\left[ \begin{array}{lc} R & \hat P \\ \hat P^H & S \end{array} \right]
$, with $S=\frac12(D+D^H)$.

It is well-known, see e.g. \cite{BeaMV19,CheGH23}, that if $X=X^H>0$ is a solution of \eqref{are} then multiplication of  \eqref{GenSys} from the left by $X^{\frac 12}$ { (the principal square root of $X$)}
 and a change of variable $\xi = X^{\frac 12}x$ leads to the pH formulation
\begin{eqnarray}
\label{phrepresentation}
 \dot \xi &=& X^{\frac 12} AX^{-\frac 12} \xi + X^{\frac 12} B u = (J-R) \xi + (\hat B- \hat P)u,\\
y & = &  C X^{-\frac 12} \xi + Du=(\hat B+ \hat P)^H \xi + (S+N) u, \nonumber
\end{eqnarray}
with  $J=\frac 12 (X^{\frac 12} AX^{-\frac 12}-X^{-\frac 12} A^H X^{\frac 12})$,
$R=\frac 12 (X^{\frac 12} AX^{-\frac 12} +X^{-\frac 12} A^H X^{\frac 12})$,
$\hat B=\frac 12 ({X^{\frac 12}}  B+X^{-\frac 12}C^H)$,
$\hat P=\frac 12 (-{X^{\frac 12}}  B+X^{-\frac 12}C^H)$,
$S=\frac 12 (D+D^H)$, $N=\frac 12 (D-D^H)$, and $Q=I$.

In this way any positive definite solution $X$  of \eqref{are} leads to a pH representation of the system. This immediately raises the question which solution of \eqref{are} leads to a robust or even optimally robust representation \eqref{phrepresentation}. {This has been the topic of several recent papers, where partial solutions are presented,
see \cite{BanMNV20,GilMS18, MehV20}. To fully answer} this question, it is essential to characterize the set of positive definite solutions of \eqref{LMIcond} and, {in particular, the extremal solutions in this set, which is one of the goals of this paper.}

A classical approach to solve \eqref{are} is to compute $n$-dimensional \emph{Lagrangian invariant subspaces}  of the Hamiltonian matrix~\eqref{hammatrix}, i.e.
subspaces satisfying
\begin{equation} \label{lagrange}
\mat{cc} F & G \\ -K & -F^H\rix \mat{c} W_1 \\ W_2 \rix =\mat{c} W_1 \\ W_2 \rix {T_{11}},
\end{equation}
where ${T_{11}} \in \mathbb C^{n\times n}$ and $W_1^HW_2=W_2^HW_1$.

If $W_1$ is invertible then $X=W_2W_1^{-1}$ is a Hermitian solution of \eqref{are} and the different solutions can be characterized by the choice of the Lagrangian invariant subspace which depends on the selection of the spectrum of {$T_{11}$}, see \cite{FreMX02} for a complete characterization of the set of Lagrangian subspaces of $H$. This solution set is particularly complicated if the matrix $H$ has purely imaginary eigenvalues, because in this case the approach to solve \eqref{are} via the numerical calculation of the Lagrangian invariant subspaces as  in \eqref{lagrange} may not be feasible. It may happen that either Lagrangian invariant subspaces do not exist or the matrix $W_1$ is not invertible, see
\cite{FreMX02,LanR95,Meh91} for a detailed analysis under controllability, observability, stabilizability, and detectability conditions. An approach that can deal with possible limiting situations where eigenvalues of \eqref{hammatrix} are on the imaginary is the use of Newton's method, see \cite{Ben97,KanN94}.

In this paper we study the Riccati equations \eqref{are} and inequalities \eqref{ari} arising in the passivity analysis, where both $G$ and $K$ are positive semidefinite. We derive the perturbation theory for the 
invariant subspaces in \eqref{lagrange}, and extend in particular the perturbation theory of \cite{MehX08}.

The paper is organized as follows.
In Section~\ref{sec:prelim} we recall and extend some known results on the positive definiteness of solutions of the ARE \eqref{are} and the notation.

Section~\ref{sec:evpt} then studies the behavior of the eigenvalues of the Hamiltonian matrix that is obtained when the ARE is perturbed by a positive semidefinite constant term.

\section{Classical solution theory} \label{sec:prelim}
In this section we present the notation and recall some well-known   results on the solvability of \eqref{ari} and \eqref{are}.

A pair of matrices $A\in \mathbb C^{n\times n}$, $B\in \mathbb C^{n\times m}$
is called \emph{controllable} if $\rank \mat{cc} sI-A &B \rix=n$ for all $s\in \mathbb{C}$, and a pair of matrices $A\in \mathbb C^{n\times n}$, $C\in \mathbb C^{p\times n}$
is called \emph{observable} if $\rank\mat{c}sI-A\\ C\rix=n$ for all $s\in \mathbb{C}$.
For our analysis  it will be very important to identify controllable as well observable subspaces, because the solution of \eqref{ari}  and \eqref{are} will turn out only to be nonsingular and therefore positive definite in these subspaces.

These subspaces can be obtained via {a condensed form under unitary transformations due to \cite{Van79}}.
We {present the result and a proof} in our notation because it will be useful for our analysis.
\begin{proposition}\label{prop:unitarykalman}
For  matrices $F,G,K$ as in \eqref{defcoeff} there exists a unitary matrix $U$ such that
\begin{eqnarray*}
U^H FU&=&\mat{cc|c}\wt F_{11}&0&\wt F_{13}\\\wt F_{21}&\wt F_{22}&\wt F_{23}\\\hline
0&0&\wt F_{33}\rix=:\mat{cc}F_{11}&F_{12}\\0&F_{22}\rix\\
U^H G U&=&\mat{cc|c}\wt G_{11}&\wt G_{21}^H&0\\\wt G_{21}&\wt G_{22}&0\\\hline
0&0&0\rix=:\mat{cc}G_{11}&0\\0&0\rix\\
U^H KU&=&\mat{cc|c}\wt K_{11}&0&\wt K_{31}^H\\0&0&0\\\hline
\wt K_{31}&0&\wt K_{33}\rix=:\mat{cc}K_{11}&K_{21}^H\\ K_{21}& K_{22}\rix,
\end{eqnarray*}
where $(F_{11},G_{11})$ and $(\wt F_{11},\wt G_{11})$ are  controllable,
and $(\wt F_{11},\wt K_{11})$ is observable.
\end{proposition}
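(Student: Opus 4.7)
The plan is to construct $U$ as a composition of two unitary transformations, the first implementing a controllability reduction on $(F,G)$ and the second an observability reduction on the restricted pair inside the resulting controllable block, exploiting the PSD structure of $G$ and $K$ to obtain the off-diagonal zero blocks.

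First I would carry out the standard controllability reduction on $(F,G)$. Let $\mathcal R_c = \sum_{k=0}^{n-1} F^k\,\range G$; it is $F$-invariant, and I let its dimension be $n_1+n_2$. Choose a unitary $U_1$ whose first $n_1+n_2$ columns span $\mathcal R_c$ and whose last $n_3$ columns span $\mathcal R_c^\perp$. $F$-invariance of $\mathcal R_c$ makes $U_1^H F U_1$ block upper triangular with blocks $F_{11}$ of size $n_1+n_2$ and $F_{22}$ of size $n_3$; because $\range G\subseteq \mathcal R_c$ and $G=G^H\ge 0$, the space $\mathcal R_c^\perp$ lies in the kernel of $G$ on both sides, so $U_1^H G U_1$ is block diagonal with $G_{11}\ge 0$ in the top-left and zero elsewhere. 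The pair $(F_{11},G_{11})$ is then controllable by construction, and $U_1^H K U_1$ is a general Hermitian $2\times 2$ block matrix with blocks $K_{11}$, $K_{21}$, $K_{22}$.

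Second, within the leading $(n_1+n_2)$-dimensional block I would perform an observability reduction on $(F_{11},K_{11})$. Let $\mathcal N_1 = \bigcap_{k\ge 0} \ker(K_{11}F_{11}^k)$ be its unobservable subspace, of dimension $n_2$; it is $F_{11}$-invariant and contained in $\ker K_{11}$. Apply a unitary $U_2$ acting only on the $\mathcal R_c$ part whose last $n_2$ columns span $\mathcal N_1$; this produces the block-lower-triangular form of the refined $F_{11}$ and the block-diagonal form of $K_{11}$ claimed in the statement, with the quotient pair $(\tilde F_{11},\tilde K_{11})$ observable by the standard Kalman argument. The additional structure $K_{21}=[\tilde K_{31},\,0]$ in the refined basis uses positive semidefiniteness of $K$: for any $v\in \mathcal N_1$, the padded vector $\tilde v=(v,0)^T\in \mathbb C^n$ satisfies $\tilde v^H (U_1^H K U_1)\tilde v = v^H K_{11} v = 0$, so $U_1^H K U_1 \ge 0$ forces $(U_1^H K U_1)\tilde v=0$ and hence $K_{21}v=0$.

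The main obstacle, and the nontrivial use of the PSD hypothesis on $G$, is verifying controllability of the inner pair $(\tilde F_{11},\tilde G_{11})$, since the reduction $U_2$ is built from $K_{11}$ and has no a priori relation to $G_{11}$. My plan is to argue by the PBH test. Suppose there exist $0\ne v \in \mathbb C^{n_1}$ and $\lambda\in \mathbb C$ with $v^H \tilde F_{11}=\lambda v^H$ and $v^H \tilde G_{11}=0$. Since $\tilde G_{11}$ is a principal submatrix of the PSD matrix $G_{11}$, it is itself PSD, so $v^H\tilde G_{11}v=0$ yields $\tilde G_{11}v=0$. Pad to $\tilde v=(v,0)^T\in \mathbb C^{n_1+n_2}$; the block-lower-triangular form of the refined $F_{11}$ gives $\tilde v^H F_{11}=\lambda \tilde v^H$, and $\tilde v^H G_{11}\tilde v = v^H\tilde G_{11}v=0$ combined with $G_{11}\ge 0$ gives $G_{11}\tilde v=0$, hence $\tilde v^H G_{11}=0$. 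This contradicts the PBH condition for controllability of $(F_{11},G_{11})$ established in the first step. Taking $U = U_1\,\mathrm{diag}(U_2,I_{n_3})$ yields the desired unitary, and a direct block multiplication confirms all three transformed matrices have the claimed pattern.
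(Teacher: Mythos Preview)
Your proposal is correct and follows essentially the same approach as the paper. Both carry out a controllability reduction on $(F,G)$ followed by an observability reduction on $(F_{11},K_{11})$, and both isolate the controllability of $(\wt F_{11},\wt G_{11})$ as the one nontrivial point, settling it by the PBH test combined with positive semidefiniteness of $G_{11}$. The paper outsources the two reductions to Van Dooren's staircase form and uses a $2\times 2$ indefiniteness argument to force $x^H\wt G_{21}^H=0$; your route via the quadratic-form implication $\tilde v^H G_{11}\tilde v=0\Rightarrow G_{11}\tilde v=0$ is equivalent and arguably cleaner, and you also make explicit the PSD-of-$K$ argument for the zero block in $K_{21}$, which the paper leaves implicit.
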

\begin{proof}
The proof that $(F_{11},G_{11})$ is controllable and  $(\wt F_{11},\wt K_{11})$ is observable follows from the computation of the staircase form of $\mat{cc}F&G\rix$ and $\mat{cc}F_{11}^H&K_{11}\rix$, see \cite{Van79}.

Since $G\ge 0$, so is $G_{11}$, which implies that $(\wt F_{11},\wt G_{11})$ is controllable. To see this, suppose that $(\wt F_{11},\wt G_{11})$ is not controllable. Then there exists a nonzero vector $x$ and a scalar $\lambda$ such that  $x^H(\lambda I-\wt F_{11})=0$ and
$x^H \wt G_{11}=0$. Then $x^H{\wt G_{21}^H}=0$ as well. If this was not the case, then for $y={\wt G_{21}} x$, we have
\[
{\mat{cc}x&0\\0&y \rix^H} G_{11}\mat{cc}x&0\\0&y\rix =\mat{cc}0&\nrm{y}_2^2\\\nrm{y}_2^2
&y^H \wt G_{22}y\rix,
\]
which is indefinite, contradicting  the positive semi-definiteness of $G_{11}$. Thus
\[
\mat{c}x\\0\rix^H\mat{cc}\lambda I-F_{11}&G_{11}\rix=0,
\]
which contradicts the controllability of $(F_{11},G_{11})$.
\end{proof}

\subsection{Solvability of algebraic Riccati equations}\label{sec:aresol}
In order to characterize the existence of positive definite (positive semidefinite) solutions of \eqref{ari} and \eqref{are}, we have to identify what happens in the uncontrollable and unobservable subspaces.

To study this, let $U$ be the unitary matrix in Proposition~\ref{prop:unitarykalman} and let $U^H XU=\mat{cc}X_{11}&X_{21}^H\\X_{21}&X_{22}\rix$. Multiplying  both (\ref{ari}) and (\ref{are}) from the left by $U^H$ and from the right by $U$, the Riccati operator on the left sides takes the form
$
\mat{cc}
\Phi_{11}&\Phi_{21}^H\\\Phi_{21}&\Phi_{22}
\rix
$,
with
\begin{eqnarray*}
\Phi_{11}&=&
F_{11}^H X_{11}+X_{11}F_{11}+X_{11}G_{11}X_{11}+K_{11},\\
\Phi_{21}&=&X_{21}(F_{11}+G_{11}X_{11})+F_{22}^HX_{21}+F_{12}^HX_{11}+K_{21},\\
\Phi_{22}&=&
F_{22}^H X_{22}+X_{22}F_{22}+F_{12}^HX_{21}^H+X_{21}F_{12}+X_{21} G_{11}X_{21}^H+K_{22}.
\end{eqnarray*}
%
{For (\ref{ari}) (or (\ref{are})) to have a positive definite solution $X$ it
is necessary that $\Phi_{11}\le 0$ (or $\Phi_{11}=0$) has a positive definite
solution $X_{11}$.} The latter is equivalent to the condition that {$\Psi\le 0$ (or
$\Psi=0$})
has a positive definite solution $Y$, where $\Psi$ is the \emph{dual Riccati
operator} defined by
\[
\Psi=F_{11}Y+YF_{11}^H+YK_{11}Y+G_{11}.
\]
Using the block structures of $F_{11},K_{11},G_{11}$ given in
Proposition~\ref{prop:unitarykalman}, and partitioning
\[
Y=\mat{cc}Y_{11}&Y_{21}^H\\Y_{21}&Y_{22}\rix,
\]
 one has
\[
\Psi=\mat{cc}\Psi_{11}&\Psi_{21}^H\\\Psi_{21}&\Psi_{22}\rix,
\]
where
\[
\Psi_{11}=\wt F_{11}Y_{11}+Y_{11}\wt F_{11}^H+Y_{11}\wt K_{11}Y_{11}+\wt G_{11}.
\]
Its dual operator is
\[
\wt\Phi_{11}=\wt F_{11}^H\wt X_{11}+\wt X_{11}\wt F_{11}+\wt X_{11}\wt G_{11}\wt X_{11}+\wt K_{11}.
\]
As shown in \cite{BeaMX15_ppt}, a necessary condition for (\ref{ari}) { (or (\ref{are}))}
to have a positive definite solution is that $\wt\Phi_{11}\le 0$ (or $\wt\Phi_{11}=0$)
has a positive definite solution. Recall that $(\wt F_{11},\wt G_{11})$ is controllable
and $(\wt F_{11},\wt K_{11})$ is observable.
In the following we therefore mainly consider the matrix triples $(F,G,K)$ where these properties hold.

The ARE (\ref{are}) is associated with the {\em Hamiltonian matrix} \eqref{hammatrix}. Now suppose that the columns of
 $W=\mat{c}W_1\\W_2\rix\in\mathbb C^{2n\times n}$ are orthonormal and span a \emph{Lagrangian invariant subspace}, i.e.
\begin{equation}\label{LIS}
HW=WT_{11},\quad W^HW=I,\quad W^H JW=0
 \end{equation}
 for some matrix $T_{11}\in\mathbb C^{n\times n}$.
Then
\[
Q=\mat{cc}W_1&-W_2\\W_2&W_1\rix
\]
is {\em unitary and symplectic}, i.e., $Q^H Q=I$ and $Q^H J Q=J$,  and
\begin{equation}\label{hsh}
Q^HHQ=\mat{cc}T_{11}&T_{12}\\0&-T_{11}^H\rix
\end{equation}
is a {\em Hamiltonian Schur form} of $H$. Note that if $H$ is real then $Q$ can be chosen to be real orthogonal symplectic so that  $H$  has a real Hamiltonian Schur form, see \cite{LinMX99}.
\begin{lemma}[\cite{BeaMX15_ppt,LanR95}]\label{lem1}
Suppose that     the  Hamiltonian matrix~\eqref{hammatrix}
has a Lagrangian invariant subspace, i.e., there exist square matrices
$W_1$, $W_2$, {$T_{11}$} such that \eqref{LIS}  holds.
If the pair $(F,G)$ is controllable then $W_1$ is invertible, and if $(F,K)$ is observable then  $W_2$ is invertible.
\end{lemma}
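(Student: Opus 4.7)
The plan is to argue by contradiction, using the Lagrangian condition $W_1^HW_2=W_2^HW_1$ together with the semidefiniteness of $G$ (resp.\ $K$) to manufacture a left eigenvector of $F$ that witnesses non-controllability (resp.\ non-observability).

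\textbf{Step 1: Show $GW_2x=0$ for any $x\in\ker W_1$.} Reading off the two block rows of $HW=WT_{11}$ gives
\[
FW_1+GW_2=W_1T_{11},\qquad -KW_1-F^HW_2=W_2T_{11}.
\]
If $W_1x=0$, the first identity yields $GW_2x=W_1T_{11}x$, so
\[
(W_2x)^HG(W_2x)=(W_2x)^HW_1T_{11}x=x^HW_1^HW_2T_{11}x=x^H(W_2^HW_1)T_{11}x=0,
\]
using the Lagrangian symmetry $W_1^HW_2=W_2^HW_1$ in the next to last step and $W_1x=0$ in the last. Since $G\ge 0$, this forces $GW_2x=0$, and then $W_1T_{11}x=0$ as well, i.e.\ $\ker W_1$ is $T_{11}$-invariant.

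\textbf{Step 2: Extract a left eigenvector of $F$ killing $G$.} Assume now that $W_1$ is singular, so $\ker W_1\neq\{0\}$; pick an eigenvector $x$ of $T_{11}\big|_{\ker W_1}$, say $T_{11}x=\lambda x$. From Step~1, $GW_2x=0$, and the second block row gives $F^HW_2x=-\lambda W_2x$. Moreover, $W_2x\neq0$: otherwise $Wx=0$ and the identity $W^HW=I$ would force $x=0$. Setting $v:=W_2x$, we thus have $v\neq0$, $v^HG=0$, and $v^HF=-\bar\lambda\,v^H$. Consequently
\[
v^H\bigl[(-\bar\lambda)I-F\ \ G\bigr]=0,
\]
so $\rank[sI-F\ \ G]<n$ at $s=-\bar\lambda$, contradicting controllability of $(F,G)$. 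Hence $W_1$ is invertible.

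\textbf{Step 3: Invertibility of $W_2$ by the dual argument.} The roles of $G$ and $K$, and of the two block rows of $HW=WT_{11}$, are exchanged under the same symplectic/Lagrangian algebra. Concretely, if $W_2x=0$, then $-KW_1x=W_2T_{11}x$ and
\[
(W_1x)^HK(W_1x)=-(W_1x)^HW_2T_{11}x=-x^H(W_1^HW_2)T_{11}x=-x^HW_2^HW_1T_{11}x=0,
\]
so $KW_1x=0$ by $K\ge 0$, giving $T_{11}$-invariance of $\ker W_2$. Picking an eigenvector $T_{11}x=\lambda x$ in $\ker W_2$ and using $FW_1=W_1T_{11}$ produces a nonzero vector $W_1x$ with $FW_1x=\lambda W_1x$ and $KW_1x=0$, contradicting observability of $(F,K)$ via $\rank\bigl[\begin{smallmatrix}\lambda I-F\\ K\end{smallmatrix}\bigr]<n$.

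The only non-routine step is Step~1: one must recognize that the Lagrangian identity turns the quadratic form $(W_2x)^HG(W_2x)$ into something automatically zero, so that $G\ge 0$ upgrades this to $GW_2x=0$, which in turn yields the invariance of $\ker W_1$ under $T_{11}$ needed to find an actual eigenvector. Once this observation is in place, the remainder is a direct translation of the eigenvalue identity on $\ker W_1$ into a rank deficiency of the Hautus pencil.
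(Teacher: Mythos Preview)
Your proof is correct. The paper does not include its own proof of this lemma; it is stated with citations to \cite{BeaMX15_ppt,LanR95} and used as a known result. Your argument---using the Lagrangian identity $W_1^HW_2=W_2^HW_1$ together with $G\ge 0$ (resp.\ $K\ge 0$) to show $\ker W_1$ (resp.\ $\ker W_2$) is $T_{11}$-invariant, then extracting a Hautus-type eigenvector---is precisely the standard proof one finds in those references.

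One cosmetic remark on Step~1: your displayed chain
\[
(W_2x)^HW_1T_{11}x=x^HW_1^HW_2T_{11}x=x^H(W_2^HW_1)T_{11}x=0
\]
applies the Lagrangian identity twice in opposite directions and ends on an expression where ``$W_1x=0$'' is not literally visible. The intended reading is
\[
(W_2x)^HW_1T_{11}x=x^HW_2^HW_1T_{11}x=x^HW_1^HW_2T_{11}x=(W_1x)^HW_2T_{11}x=0,
\]
which makes the use of $W_1x=0$ transparent. This is only a matter of presentation; the mathematics is sound.
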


{In \cite{Wil72a} a characterization is given
when all solutions of \eqref{are}
are positive definite.
We slightly improve this result, present it in the complex case and give a proof in our notation.}

\begin{lemma}\label{lem2}
All solutions of the ARE (\ref{are}) are positive definite if and only if
the pair $(F,G)$ is controllable, the pair $(F,K)$ is observable,
and $F$ is asymptotically stable, i.e., every
eigenvalue of $F$ has a negative real part.
 \end{lemma}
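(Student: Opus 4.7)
The plan is to prove the two implications separately. The ``if'' direction is a short Lyapunov computation, while the ``only if'' direction leans on the condensed form of Proposition~\ref{prop:unitarykalman} together with Lemma~\ref{lem1}.

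For sufficiency, assume $(F,G)$ is controllable, $(F,K)$ is observable, and $F$ is asymptotically stable. Stability of $F$ together with $K\ge 0$ makes
\[
X_0=\int_0^\infty e^{F^Ht}Ke^{Ft}\,dt
\]
the unique Hermitian solution of $F^HX_0+X_0F+K=0$, and observability of $(F,K)$ promotes it to $X_0>0$. Subtracting this Lyapunov equation from \eqref{are} yields
\[
F^H(X-X_0)+(X-X_0)F=-XGX\le 0
\]
for every Hermitian solution $X$ of \eqref{are}, since $G\ge 0$. Stability of $F$ then gives
$X-X_0=\int_0^\infty e^{F^Ht}(XGX)e^{Ft}\,dt\ge 0$, so $X\ge X_0>0$. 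Controllability of $(F,G)$ is not invoked here; it will enter only in the reverse direction.

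For necessity, assume every Hermitian solution of \eqref{are} is positive definite, and that at least one such solution exists. Fix $X>0$ solving \eqref{are}, so that $F^HX+XF=-(K+XGX)\le 0$; by Lyapunov's theorem the spectrum of $F$ lies in the closed left half plane. If $i\omega\in\sigma(F)$ with eigenvector $w\ne 0$, evaluating $w^H(\cdot)w$ on the left-hand side of \eqref{are} forces $w^H(K+XGX)w=0$, hence $Kw=0$ and $GXw=0$, placing $w$ in the unobservable subspace of $(F,K)$. Once observability of $(F,K)$ is established this rules out imaginary eigenvalues and upgrades closed-half-plane stability to strict asymptotic stability of $F$. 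For observability of $(F,K)$, suppose the unobservable subspace $\mathcal V$ is nonzero; $\mathcal V$ is $F$-invariant and annihilated by $K$, so with any basis $V$ of $\mathcal V$ the columns of $\bigl[\begin{smallmatrix}V\\0\end{smallmatrix}\bigr]$ span an $H$-invariant isotropic subspace. Extending this to a full $n$-dimensional Lagrangian $H$-invariant subspace as in \eqref{LIS} and reading off $X=W_2W_1^{-1}$ produces a Hermitian solution of \eqref{are} with $Xv=0$ for every $v\in\mathcal V$, contradicting positive definiteness. The dual argument applied to the dual Riccati operator $\Psi$ introduced earlier in this section, in which the roles of $G$ and $K$, and of controllability and observability, are exchanged, yields controllability of $(F,G)$.

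The main obstacle is the extension step in the necessary direction: one must extend the isotropic $H$-invariant subspace spanned by $\bigl[\begin{smallmatrix}V\\0\end{smallmatrix}\bigr]$ to an $n$-dimensional Lagrangian $H$-invariant subspace whose upper block $W_1$ is invertible, so that $X=W_2W_1^{-1}$ is genuinely defined. Here the condensed form of Proposition~\ref{prop:unitarykalman} and the classical existence of a stabilizing solution of the reduced ARE on the observable (respectively, controllable) block do most of the work, and this is precisely where the hypothesis that \eqref{are} admits at least one Hermitian solution is really used.
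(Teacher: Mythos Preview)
Your sufficiency argument is correct and in fact more self-contained than the paper's, which simply invokes Lemma~\ref{lem1} for invertibility of every solution and then asserts that stability of $F$ forces positive definiteness. Your Lyapunov comparison $X\ge X_0>0$ does this cleanly and, as you note, without using controllability.

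For necessity your overall strategy matches the paper's, but the step you flag as ``the main obstacle'' is exactly where the paper does the concrete work you leave undone, and the Lagrangian-extension language obscures what is really a short block computation. The paper does not try to extend an isotropic $H$-invariant subspace abstractly; it writes down the singular solution directly. After bringing $(F,G,K)$ to the block form
\[
F=\mat{cc}F_{11}&0\\F_{21}&F_{22}\rix,\quad
G=\mat{cc}G_{11}&G_{21}^H\\G_{21}&G_{22}\rix,\quad
K=\mat{cc}K_{11}&0\\0&0\rix
\]
with $(F_{11},K_{11})$ observable, it takes the assumed $X>0$, applies the congruence with $Y=\mat{cc}I&-X_{21}^HX_{22}^{-1}\\0&I\rix$ so that $\wt X=YXY^H$ is block diagonal, and reads off from the $(1,1)$ block of the transformed ARE that $\wt X_{11}$ solves the reduced equation $F_{11}^H\wt X_{11}+\wt X_{11}F_{11}+\wt X_{11}G_{11}\wt X_{11}+K_{11}=0$. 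One then checks directly that $\wh X=\mat{cc}\wt X_{11}&0\\0&0\rix$ solves the \emph{original} ARE (the zero block kills all $F_{21},G_{21},G_{22}$ contributions), producing a singular Hermitian solution and the desired contradiction. This $\wh X$ is precisely the $W_2W_1^{-1}$ you were seeking, with Lagrangian subspace $\range\mat{c}I\\\wh X\rix$; once you see the explicit construction, the abstract extension problem evaporates. For controllability the paper uses exactly the duality you invoke, passing to $Y=X^{-1}$ and the dual ARE $FY+YF^H+YKY+G=0$; for asymptotic stability it evaluates \eqref{are} on an eigenvector of $F$, as you do, using observability to ensure $x^HKx>0$.

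In short, nothing in your outline is wrong, but you should replace your final paragraph by the two-line construction of $\wh X$ above; that is the missing content, and it is simpler than the Lagrangian framing suggests.
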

\begin{proof} Lemma~\ref{lem1} shows {that} the controllability of  $(F,G)$ and the observability of $(F,K)$  ensure the existence of invertible solutions and the asymptotic stability of $F$ guarantees the positive definiteness of the solutions. For necessity, without loss of generality we set
\[
F=\mat{cc}F_{11}&0\\ F_{21}&F_{22}\rix,\quad
G=\mat{cc}G_{11}&G_{21}^H\\
G_{21}&G_{22}\rix,\quad
K=\mat{cc}K_{11}&0\\0&0\rix,
\]
where $(F_{11},K_{11})$ is observable. If $X=\mat{cc}X_{11}&X_{21}^H\\X_{21}&X_{22}\rix$ is a positive definite solution, then $X_{22}$ is positive definite as well. With
\[
Y=\mat{cc}I&-X_{21}^HX_{22}^{-1}\\0&I\rix
\]
we then have
\[
\wt F =Y^{-H}FY^H=:\mat{cc}F_{11}&0\\\wt F_{21}&F_{22}\rix,\quad
\wt G=Y^{-H}GY^{-1}=:\mat{cc}G_{11}&\wt G_{21}^H\\\wt G_{21}&\wt G_{22}\rix
\]
and
\[
\wt X=YXY^H=\mat{cc}\wt X_{11}&0\\0&X_{22}\rix,\quad
\wt K=YKY^H=K.
\]
Since
$\wt F^H \wt X+\wt X\wt F+\wt X\wt G\wt X+\wt K=0$,
by comparing the blocks we have
\[
F_{11}^H \wt X_{11}+\wt X_{11}F_{11}+\wt X_{11}G_{11}\wt X_{11}+K_{11}=0
\]
and
\[
(\wt F_{21}^H +\wt X_{11}\wt G_{21}^H)X_{22}=0,\quad
F_{22}^H X_{22}+X_{22}F_{22}+X_{22}\wt G_{22}X_{22}=0.
\]
Using $\wt X_{11}$, we are able to construct $\wh X=\mat{cc}\wt X_{11}&0\\0&0\rix$
that solves (\ref{are}) as well. However, $\wh X$ is singular and hence  $(F,K)$
must be observable.

The matrix  $X$ is a Hermitian positive definite solution of (\ref{are}) if and only if $Y=X^{-1}$ is a Hermitian positive definite solution of
$FY+YF^H+YKY+G=0$.
We therefore conclude that $(F,G)$ must be controllable.

Finally, suppose that $\lambda$ is an eigenvalue of $F$ and $x$ is a corresponding
right eigenvector. Let $X$ be a positive definite solution of (\ref{are}). Then, since
\[
x^H(F^H X+XF+XGX+K)x=0,
\]
we have that
\[(\lambda+\bar\lambda) x^H Xx+x^H (X{G}X+K)x=0.
\]
Since $(F,K)$ is observable, we have that $Kx\ne 0$, and thus
\[
(\lambda+\bar\lambda) x^H Xx=-x^H(XGX+K)x\le -x^H Kx<0,
\]
which shows that $\lambda$ has  negative real part.
\end{proof}

The following result is well-known.
\begin{lemma}[\cite{BoyEFB94,LanR95}]\label{lem3} Suppose that $(F,G)$ is controllable and that $(F,K)$ is observable. {Then (\ref{are}) has a positive definite solution
if and only if (\ref{ari}) has a positive definite solution}.
\end{lemma}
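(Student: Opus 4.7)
The easy direction is immediate: any positive definite solution of the equation (\ref{are}) also satisfies the inequality (\ref{ari}). So the content is the converse: assuming (\ref{ari}) has a positive definite solution $X_0$, one must produce a positive definite solution of (\ref{are}).

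The plan is the change of variables $X = X_0 + Z$ in (\ref{are}). Writing
\[
R_0 := -(F^H X_0 + X_0 F + X_0 G X_0 + K) \ge 0, \qquad F_0 := F + G X_0,
\]
a direct expansion reduces (\ref{are}) to the equivalent Riccati equation
\[
F_0^H Z + Z F_0 + Z G Z = R_0,
\]
which, via the substitution $W := -Z$, is the standard (``LQR''-type) Riccati equation
\[
F_0^H W + W F_0 - W G W + R_0 = 0
\]
with coefficient matrices $G, R_0 \ge 0$. It suffices to find a negative semidefinite Hermitian solution $W_- \le 0$, because then $Z := -W_- \ge 0$ and $X_+ := X_0 + Z \ge X_0 > 0$ solves (\ref{are}).

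Classical existence theory for this standard LQR equation (\cite{BoyEFB94,LanR95}) supplies exactly such an ``anti-stabilizing'' solution under mild stabilizability and detectability hypotheses on the pairs $(F_0, G)$ and $(F_0, R_0)$. Stabilizability of $(F_0, G) = (F + G X_0, G)$ is equivalent to controllability of $(F, G)$ (state feedback preserves controllability) and so holds by assumption. The remaining step is to deduce the detectability-type hypothesis on $(F_0, R_0)$ from observability of $(F, K)$.

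This last step is the main obstacle, because $R_0$ depends quadratically on $X_0$ and the relationship between $\ker R_0$ and $\ker K$ is not transparent. The natural approach is a direct eigenvector calculation: if $F_0 v = \lambda v$ with $\mathrm{Re}\,\lambda \ge 0$ and $R_0 v = 0$, substituting $F v = \lambda v - G X_0 v$ into the defining identity of $R_0$ yields $(F^H + \lambda I)(X_0 v) = -K v$, and multiplication by $v^H$ gives the scalar identity $(\lambda + \bar\lambda)\, v^H X_0 v = v^H X_0 G X_0 v - v^H K v$. Combining these two identities with the positive semidefiniteness of $K$ and $G$ and the positive definiteness of $X_0$ should force $K v = 0$ and $G X_0 v = 0$, hence $F v = \lambda v$; together with $K v = 0$ this contradicts observability of $(F, K)$ unless $v = 0$, yielding the required detectability and completing the reduction.
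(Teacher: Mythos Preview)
The paper does not prove Lemma~\ref{lem3}; it is quoted from \cite{BoyEFB94,LanR95} without argument, so there is no ``paper's proof'' to compare against. Your reduction---shift by an ARI solution $X_0$, set $F_0=F+GX_0$ and $R_0\ge 0$, and reduce to the standard LQR equation $F_0^H W+WF_0-WGW+R_0=0$---is the right idea and is exactly how such results are proved in the cited references. The forward direction and the computation $X=X_0-W_-$ with $W_-\le 0$ are correct.

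The gap is in the step you yourself flag. You assert that the classical existence theorem for the anti-stabilizing solution $W_-\le 0$ requires a detectability hypothesis on $(F_0,R_0)$, and then try to verify that hypothesis from observability of $(F,K)$. That verification cannot be completed: the scalar identity $(\lambda+\bar\lambda)v^HX_0v=v^HX_0GX_0v-v^HKv$ does not force $Kv=0$, and in fact detectability of $(F_0,R_0)$ can fail outright. Take $n=1$, $F=-1$, $G=K=1$; the only ARI solution is $X_0=1$, giving $F_0=0$, $R_0=0$, so $(F_0,R_0)$ is not detectable, yet the ARE has the positive solution $X=1$. Your eigenvector calculation in this case gives $Kv=1\ne 0$, so the conclusion you hope for is simply false.

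The fix is that detectability is not needed. Existence of the maximal solution $V_+\ge 0$ of an LQR Riccati $A^HV+VA-VGV+R=0$ with $G,R\ge 0$ requires only stabilizability of $(A,G)$ (this is the version stated in \cite{LanR95}). Apply this with $A=-F_0$: since $(F_0,G)$ is controllable, so is $(-F_0,G)$, hence stabilizable, and the resulting $V_+\ge 0$ gives $W_-:=-V_+\le 0$ as the minimal (anti-stabilizing) solution of your equation. Then $X:=X_0-W_-\ge X_0>0$ solves~\eqref{are}, and the observability of $(F,K)$ is never used in this direction. (It is used only to guarantee that the ARI solution $X_0$, assumed positive definite, exists in the first place---or, alternatively, to ensure via Lemma~\ref{lem2} that \emph{all} ARE solutions are positive definite once $F$ is asymptotically stable.)
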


In this subsection we have summarized some preliminary results about the existence of positive definite solutions of (\ref{are}) and (\ref{ari}). In the next subsection we discuss the construction of such solutions
for (\ref{are}) when $(F,G)$ is not controllable or $(F,K)$ is not observable.
\subsection{Positive definite solutions of the ARE (\ref{are})}
In this subsection we discuss the solution of the ARE \eqref{are} with  $F$ asymptotically stable and $G=G^H\geq 0,K=K^H\geq 0$.
{These results will be needed to analyze the subtle difference between the solutions of ARE~\eqref{are} and the ARI~\eqref{ari}.}
 We assume that the system has been transformed similarly as in Proposition~\ref{prop:unitarykalman} just with the roles
of $G$ and $K$ exchanged {and that $F$, $G$, $K$ are already in the following block forms}
\begin{eqnarray}
\nonumber
F&=&\mat{cc|c}\wt F_{11}&\wt F_{12}&0\\0&\wt F_{22}&0\\\hline
{\wt F_{31}}&\wt F_{32}&\wt F_{33}\rix=:\mat{cc}F_{11}&0\\ F_{21}&F_{22}\rix\\
\label{minf}
G&=&\mat{cc|c}\wt G_{11}&0&\wt G_{31}^H\\0&0&0\\\hline
\wt G_{31}&0&\wt G_{33}\rix=:\mat{cc}G_{11}&G_{21}^H\\ G_{21}&G_{22}\rix,\\
\nonumber
K&=&\mat{cc|c}\wt K_{11}&\wt K_{21}^H&0\\\wt K_{21}&\wt K_{22}&0\\\hline
0&0&0\rix=:\mat{cc}K_{11}&0\\0&0\rix,
\end{eqnarray}
where $(F_{11},K_{11})$ and $(\wt F_{11},\wt K_{11})$ are observable, and $(\wt F_{11},\wt G_{11})$ is controllable.

Suppose that the ARE
\begin{equation}\label{minare}
\wt F_{11}^H \wt X_{11}+\wt X_{11}\wt F_{11}+\wt X_{11}\wt G_{11}\wt X_{11}+\wt K_{11}=0
\end{equation}
has a solution $\wt X_{11}=\wt X_{11}^H$.
Since
$(\wt F_{11},\wt G_{11})$ is controllable, $(\wt F_{11},\wt K_{11})$ is observable, and
$\wt F_{11}$ is asymptotically stable,
by {Lemma~\ref{lem2} we have that} $\wt X_{11}$ is positive definite. Now we consider
the Sylvester equation
\[
(\wt F_{11}+\wt G_{11}\wt X_{11})^H \wt X_{21}^H+\wt X_{21}^H\wt F_{22}+\wt X_{11}\wt F_{12}
+\wt K_{21}^H=0.
\]
If $-(\wt F_{11}+\wt G_{11}\wt X_{11})^H$ and $\wt F_{22}$ do not have common
eigenvalues, then the Sylvester equation has a unique solution $\wt X_{21}^H$.
If $-(\wt F_{11}+\wt G_{11}\wt X_{11})^H$ and $\wt F_{22}$ share some common eigenvalues and the Sylvester equation is consistent, then we may choose a solution $\wt X_{21}^H$
(from infinitely many solutions). If it is not consistent, we need to choose a
$\wt X_{11}$ such that $-(\wt F_{11}+\wt G_{11}\wt X_{11})^H$ and $\wt F_{22}$
do not have the same eigenvalues. Note that $\wt F_{22}$ is asymptotically stable
and the eigenvalues of the Hamiltonian matrix $\mat{cc}\wt F_{11}&\wt G_{11}\\-\wt K_{11}&-\wt F_{11}^H\rix$ corresponding to (\ref{minare}) are those of $\wt F_{11}+\wt G_{11}\wt X_{11}$ and $-(\wt F_{11}+\wt G_{11}\wt X_{11})^H$. So it is always possible to
choose a solution $\wt X_{11}$ such that the common eigenvalues (with negative real parts) are contained in $\wt F_{11}+\wt G_{11}\wt X_{11}$, see~\cite{FreMX02}.
With such an $\wt X_{11}$ the Sylvester equation has a unique solution
{$\wt X_{21}^H$}.

Once we have determined $\wt X_{11}$ and $\wt X_{21}^H$, since $\wt F_{22}$ is asymptotically stable, the
Lyapunov equation
\[
\wt F_{22}^H \wt X_{22}+\wt X_{22}\wt F_{22}+\wt F_{12}^H \wt X_{21}^H+\wt X_{21} \wt F_{12}+\wt X_{21}\wt G_{11}\wt X_{21}^H+\wt K_{22}=0
\]
has a unique solution $\wt X_{22}$. Then the matrix $X_{11}=\mat{cc}\wt X_{11}&\wt X_{21}^H\\\wt X_{21}&\wt X_{22}\rix$ solves the ARE
\eq{minare2}
F_{11}^H X_{11}+X_{11}F_{11}+X_{11}G_{11}X_{11}+K_{11}=0.
\en
Since $(F_{11},K_{11})$ is observable, similar to the proof of Lemma~\ref{lem2} we can show
that $X_{11}>0$. Obviously, $\mat{cc}X_{11}&0\\0&0\rix$ is a positive semidefinite solution of
(\ref{are}) with $F,G,K$ given in (\ref{minf}). The ARE (\ref{are}) may or may not have positive definite solutions. Suppose it has one. Then we express it as
\begin{equation}\label{ares}
X=\mat{cc}I&Z\\0&I\rix\mat{cc}X_{11}&0\\0&X_{22}\rix
\mat{cc}I&0\\Z^H&I\rix,\qquad X_{11}>0,\quad X_{22}>0.
\end{equation}
By premultiplying $\mat{cc}I&Z\\0&I\rix$ and postmultiplying
$\mat{cc}I&0\\Z^H&I\rix$ to (\ref{are}), one has the ARE
\begin{eqnarray*}
&&\mat{cc}F_{11}&0\\\wh F_{21}&F_{22}\rix^H
\mat{cc}X_{11}&0\\0&X_{22}\rix+
\mat{cc}X_{11}&0\\0&X_{22}\rix
\mat{cc}F_{11}&0\\\wh F_{21}&F_{22}\rix\\
&&
+\mat{cc}X_{11}&0\\0&X_{22}\rix
\left(\mat{cc}I&0\\Z^H&I\rix G\mat{cc}I&Z\\0&I\rix\right)
\mat{cc}X_{11}&0\\0&X_{22}\rix+\mat{cc}K_{11}&0\\0&0\rix=0,
\end{eqnarray*}
where $\wh F_{21}=F_{21}+Z^HF_{11}-F_{22}Z^H.$
From the $(1,1)$ block, $X_{11}$ has to be a positive
definite solution of (\ref{minare2}), which always exists.
From the $(1,2)$ block, one has
\[
(\wh F_{21}^H+{X_{11}(G_{11}Z+G_{21}^H)})X_{22}=0.
\]
Because $X_{22}>0$, it becomes the Sylvester equation
\begin{equation}\label{1sylvester}
(F_{11}+G_{11}X_{11})^H Z-ZF_{22}^H+F_{21}^H+X_{11}G_{21}^H=0.
\end{equation}
From the $(2,2)$ block, $X_{22}$ solves the ARE
\begin{equation}\label{1lyapunov}
F_{22}^H X_{22}+X_{22}F_{22}+X_{22}(G_{22}+Z^H G_{21}^H+G_{21} Z+Z^H G_{11}Z)X_{22}=0.
\end{equation}
Since
\[
F_{11}+G_{11}X_{11}=\mat{cc}\wt F_{11}+\wt G_{11}\wt X_{11}&\wt F_{12}+\wt G_{11}\wt X_{21}^H\\
0&\wt F_{22}\rix,
\]
when $F_{11}+G_{11}X_{11}$ and $F_{22}$ do not share the same eigenvalues, the Sylvester equation \eqref{1sylvester} has a unique solution $Z$. Otherwise, if the Sylvester equation is consistent, we may choose one
solution $Z$. If it is not consistent, then we have two cases.
If $\wt F_{22}$ and $F_{22}$ do not have common eigenvalues,
as discussed before we may choose another solution $X_{11}$ ($\wt X_{11}$) such that $F_{11}+G_{11}X_{11}$ and $F_{22}$
have no common eigenvalues and then (\ref{1sylvester}) has a unique solution $Z$.
If $\wt F_{22}$ and $F_{22}$ have the same eigenvalues, (\ref{1sylvester})
has no solution and then the ARE (\ref{are}) has no
solution as well.

When both $X_{11}$ and $Z$ exist, then the ARE \eqref{1lyapunov} has an invertible
solution $X_{22}$ if and only if $X_{22}^{-1}$ solves
\[
F_{22}Y_{22}+Y_{22}F_{22}^H+\mat{c}Z\\ I\rix^H G\mat{c}Z\\I\rix=0.
\]
Such a solution exists if and only if $\left(F_{22}, \mat{c}Z\\I\rix^H G\mat{c}Z\\I\rix\right)$ is
controllable and in this case  $X_{22}>0$. When we have determined all
$X_{11}$, $Z$, and $X_{22}$, then $X$ given in (\ref{ares})
solves  (\ref{are}) with $(F,G,K)$ transformed as  in (\ref{minf}). Reversing the transformation to the form \eqref{minf} then gives a solution to the original ARE \eqref{are}.

Note that under the same assumptions, that $F$ is asymptotically stable
and that (\ref{minare}) has a positive definite solution, the
{Riccati} inequality (\ref{ari})
always has a positive definite solution
{regardless of the controllability  of $(F,G)$ and the observability of $(F,K)$, see \cite{BeaMX15_ppt}.
However, as we have demonstrated this may not be the case for the ARE (\ref{are}).
This shows the difference between (\ref{are}) and (\ref{ari}).
Consider the following example.}
{
\begin{example}\label{exam1}\rm Let
\[
F=\mat{cc|c}-2&0&0\\0&-1&0\\\hline 1&1&-1\rix,\quad
G=\mat{cc|c}1&0&0\\0&0&0\\\hline 0&0&1\rix,\quad
K=\mat{cc|c}3&1&0\\1&1&0\\\hline 0&0&0\rix.
\]
One can verify that \eqref{minare2} has the {unique} solution $X_{11}=\mat{cc}1&1/2\\1/2&5/8\rix$,
and \eqref{1sylvester} is inconsistent. So the corresponding ARE (\ref{are}) does not have a solution.
On the other hand the ARI \eqref{ari} has the solution
\[
X=\mat{ccc}3&1&-1\\1&1&0\\-1&0&1\rix\]
 that satisfies $F^HX+XF+XGX+K=\diag(-1,0,0)\le 0$.
\end{example}
}

{For a construction of solutions of (\ref{ari}) via numerical methods see \cite{BeaMX15_ppt}. However, since the solution of (\ref{ari}) is in general not unique, it is important to analyze the solution set and, in particular, the extremal solutions of (\ref{are}) and (\ref{ari}). This topic is studied in the next subsection.}

\subsection{Extreme solutions}\label{sec:extreme}
In this subsection we discuss the {extremal solutions of \eqref{are} and \eqref{ari}, i.e. the solutions } on the boundary of set of  positive definite solutions of \eqref{ari}.

If $X=X^H$ solves (\ref{are}), then
\begin{equation}\label{hsd}
H\mat{cc}I&0\\X&I\rix=\mat{cc}I&0\\X&I\rix\mat{cc}F+GX&G\\0&-(F+GX)^H\rix,
\end{equation}
where $H$ is defined in (\ref{hammatrix}).
So the columns of $\mat{c}I\\ X\rix$ form a Lagrangian invariant subspace of $H$ corresponding to the eigenvalues of $F+GX$ or equivalently,  $H$  has a Hamiltonian Schur form
(\ref{hsh}). However, once $H$ has a Hamiltonian Schur form, then  it may have many. The whole
classification of the Hamiltonian Schur form and therefore the solutions of (\ref{are}) were presented under some further assumptions in \cite{Wil71} and the complete picture is given in \cite{FreMX02}.

The existence of a Hamiltonian Schur form (\ref{hsh}) implies that
the algebraic multiplicity of each purely imaginary eigenvalue
(including zero) of $H$ must be even and the eigenvalues that are not purely imaginary are symmetrically placed with respect to  the imaginary axis. Recall the following results from \cite{Wil71} summarized in one lemma.
\begin{lemma}\label{lem4}
Consider a triple of matrices $(F,G,K)$ with $G=G^H\geq 0$ and $K=K^H \geq 0$.
\begin{enumerate}
\item[a.]
 Suppose that $X_-$ is a solution of (\ref{are}) such that
$F+GX_-$ is asymptotically stable and suppose that  $X$ is another solution of (\ref{are}).
Then $X_-\le X$. Also, $X_-$ is a monotonically non-decreasing function when increasing $K$ in the Loewner ordering.
\item[b.]
Suppose that $X_+$ is a solution of (\ref{are}) such that  $-(F+GX_+)$ is asymptotically stable and suppose that $X$ is another solution of (\ref{are}).
Then $X_+\ge X$. Also, $X_+$ is monotonically non-increasing when increasing $K$ in the Loewner ordering.
\item [c.]
Suppose that (\ref{are}) has two extreme solutions $X_-$ and $X_+$ with
both $F+GX_-$ and $-(F+GX_+)$ asymptotically stable.
Then any solution $X$ of (\ref{are})
satisfies $X_-\le X\le X_+$.
\item [d.] Suppose that $(F,G)$ is controllable, $(F, K)$ is observable, and (\ref{are}) has a solution. Then
(\ref{are}) has two extreme solutions $X_-$ and $X_+$ with all the eigenvalues of
$F+GX_-$ and $-(F+GX_+)$ in the closed left half complex plane. Furthermore any solution $X$ of (\ref{ari})
satisfies $X_-\le X\le X_+$.
\end{enumerate}
\end{lemma}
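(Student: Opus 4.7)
The plan is to prove (a) and (b) by a Lyapunov manipulation of the Riccati difference, derive (c) trivially, and prove (d) by first constructing $X_\pm$ from a Lagrangian invariant subspace of $H$ and then establishing the ordering through a perturbation of the constant term $K$ supplied by the theory of Section~\ref{sec:evpt}.

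For (a), set $\Delta=X-X_-$ and subtract the two AREs. The algebraic identity
\begin{equation*}
XGX - X_-GX_- = \Delta G \Delta + X_-G\Delta + \Delta GX_-
\end{equation*}
collapses the difference to
\begin{equation*}
(F+GX_-)^H \Delta + \Delta(F+GX_-) = -\Delta G \Delta \le 0,
\end{equation*}
so the integral representation of the solution of a Lyapunov equation with stable coefficient gives $\Delta\ge0$. For the $K$-monotonicity, the same subtraction applied to two problems sharing $F,G$ with $K_1\le K_2$ at their respective stable extremes yields an additional non-positive term $-(K_2-K_1)$ on the right, preserving $\Delta=X_-^{(2)}-X_-^{(1)}\ge0$. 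Part (b) is the mirror argument with $\Delta=X_+-X$ (or $\Delta=X_+^{(1)}-X_+^{(2)}$) and $-(F+GX_+)$ in the role of the stable matrix, and part (c) is the trivial combination.

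For (d) I first construct $X_\pm$: the hypothesis that \eqref{are} is solvable, together with identity \eqref{hsd}, implies that $H$ admits a Hamiltonian Schur form \eqref{hsh}, so its spectrum is symmetric about the imaginary axis with even algebraic multiplicities on the axis. I then select two $n$-dimensional Lagrangian invariant subspaces of $H$: one combining all asymptotically stable eigenvalues with half of every purely imaginary Jordan block, the other combining the anti-stable eigenvalues with the complementary halves. Controllability of $(F,G)$ together with Lemma~\ref{lem1} ensures the upper block of each basis is invertible, so each subspace is the graph of a Hermitian matrix, producing $X_-$ and $X_+$ with the prescribed closed-left-half-plane spectra for $F+GX_-$ and $-(F+GX_+)$.

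To prove $X_-\le X\le X_+$ for any ARI solution $X$, I rerun the manipulation from (a) with slack $\Theta:=-(F^HX+XF+XGX+K)\ge0$, obtaining
\begin{equation*}
(F+GX_-)^H \Delta + \Delta(F+GX_-) + \Delta G \Delta = -\Theta \le 0, \qquad \Delta=X-X_-.
\end{equation*}
When $F+GX_-$ is strictly asymptotically stable $\Delta\ge0$ follows at once, and the symmetric argument provides the upper bound. The main obstacle is the degenerate case where $H$ has purely imaginary eigenvalues and $F+GX_-$ is only closed-left stable, so that the Lyapunov operator fails to be invertible and the direct argument breaks. I handle this by a perturbation: replacing $K$ by $K+\epsilon I$ for small $\epsilon>0$, the eigenvalue perturbation theory developed in Section~\ref{sec:evpt} ensures that the perturbed Hamiltonian has no purely imaginary eigenvalues, that its extremes $X_\pm^\epsilon$ are strictly (anti-)stable, and that they depend continuously on $\epsilon$. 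The strict version of (a)--(c) applied to the perturbed problem, combined with the $K$-monotonicity of (a), (b) through the intermediate constant term $K+\Theta+\epsilon I$, sandwiches $X$ between $X_\pm^\epsilon$, and passing $\epsilon\to 0$ yields $X_-\le X\le X_+$. Thus the genuine technical content of (d) is the displacement-of-imaginary-eigenvalues result and the continuity of the extreme solutions, which is precisely the subject of Section~\ref{sec:evpt}.
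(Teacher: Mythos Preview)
Your arguments for (a)--(c) are the standard Lyapunov manipulations and are correct.

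The argument for (d), however, contains a genuine error in the perturbation step. You claim that replacing $K$ by $K+\epsilon I$ and invoking Section~\ref{sec:evpt} makes the perturbed Hamiltonian lose its purely imaginary eigenvalues. In fact Section~\ref{sec:evpt} proves exactly the opposite. Lemma~\ref{newlem1} shows that when $H$ already has a purely imaginary eigenvalue $\imath\alpha$ (necessarily with even-sized Jordan blocks, by Theorem~\ref{newthm1}), a perturbation $H+tJ\Delta$ with $\Delta\ge0$ and $(F,\Delta_{11})$ observable --- in particular $\Delta_{11}=\epsilon I$ --- splits $\imath\alpha$ into semisimple eigenvalues that \emph{remain on the imaginary axis}, half with structure inertia index $-1$ and half with $+1$. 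With mixed signs the perturbed matrix has \emph{no} Lagrangian invariant subspace for small $t>0$, so the extremes $X_\pm^\epsilon$ you invoke do not even exist. In the language of Section~\ref{sec:evpt}, increasing $K$ moves one \emph{toward} and then across the boundary of the feasible region, not away from it. A related slip: in constructing $X_\pm$ you speak of ``complementary halves'' of the purely imaginary Jordan blocks, but Theorem~\ref{newthm1} shows the Lagrangian subspace associated with the imaginary part is unique; $X_-$ and $X_+$ differ only on the non-imaginary spectrum.

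The paper does not prove Lemma~\ref{lem4} --- it is quoted from \cite{Wil71} --- but the discussion of case (d) following the lemma indicates the intended route for the ARI bound: write the slack as $\Delta_K\ge0$ so that $X$ solves the ARE with constant term $K+\Delta_K$ (which is still observable with $F$), apply the first assertion of (d) to that ARE to obtain its own extremals, and then use the $K$-monotonicity of the extremals (established in \cite{Wil71} in the closed-half-plane case) to sandwich them between $X_-$ and $X_+$. No displacement of eigenvalues off the imaginary axis is required.
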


 Let us discuss the case d. in Lemma~\ref{lem4}. If $X$ solves (\ref{ari}), then there exists $\Delta_K=\Delta_K^H\ge 0$ such that
\begin{equation}\label{areW}
F^H X+XF+XGX+K+\Delta_K=0.
\end{equation}
Because $(F,G)$ is  controllable and $(F,K+\Delta_K)$ is still observable, the ARE~\eqref{areW} has two extremal solutions bounding $X$.
By Lemma~\ref{lem4} these extremal solutions are also bounded by $X_-$ and $X_+$, the extremal solutions of (\ref{are}). Thus $X$ is bounded by $X_-$ and $X_+$ as well.

Consider now a perturbation matrix
\[
\Delta=\mat{cc}\Delta_G&\Delta_F\\ \Delta_F^H&\Delta_K\rix\ge 0
\]
and suppose that the ARE
\begin{equation}\label{mare}
(F+\Delta_F)^H X+X(F+\Delta_F)+X(G+\Delta_G)X+(K+\Delta_K)=0
\end{equation}
has a solution $X$. Then
\[
F^H X+XF+XGX+K=-\Delta_F^H X-X\Delta_F-X\Delta_GX-\Delta_K
=-\mat{c}X\\I\rix^H \Delta\mat{c}X\\I\rix
\le 0,
\]
which implies that $X$ is a solution of \eqref{ari}. On the other hand, if $X$
solves (\ref{ari}), then there exists $\Delta_K\ge 0$ such that
\begin{equation}\label{arie}
F^H X+XF+XGX+K=-\Delta_K=-\mat{c}X\\I\rix^H \mat{cc}0&0\\0&\Delta_K\rix\mat{c}X\\I\rix.
\end{equation}
Thus we immediately  have the following result.
\begin{lemma}\label{lem7} The ARI (\ref{ari}) has a solution $X$ if and only if
$X$ solves (\ref{mare}) for some $\Delta\ge 0$.
\end{lemma}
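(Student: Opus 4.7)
The statement is really two directions of a biconditional, and the paragraph immediately preceding the lemma essentially already contains the ingredients for both. My plan is to simply organize those observations into the two halves of a short proof.

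For the sufficiency direction, I would assume that $X$ solves \eqref{mare} for some $\Delta\geq 0$ with block form $\Delta=\mat{cc}\Delta_G&\Delta_F\\ \Delta_F^H&\Delta_K\rix$. The plan is to move the perturbation to the right-hand side, which gives
\[
F^H X+XF+XGX+K = -\mat{c}X\\ I\rix^H \Delta \mat{c}X\\ I\rix.
\]
Since $\Delta\ge 0$, the right-hand side is $\le 0$ in the Loewner order, so $X$ satisfies the ARI \eqref{ari}. This direction is the forward computation already displayed in the paragraph before the lemma.

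For the necessity direction, I would assume $X$ solves the ARI \eqref{ari} and explicitly construct a $\Delta\ge 0$. The natural choice is $\Delta_F=0$, $\Delta_G=0$, and
\[
\Delta_K := -(F^H X+XF+XGX+K),
\]
which is Hermitian positive semidefinite precisely because $X$ solves \eqref{ari}. Then the full perturbation matrix $\Delta=\mat{cc}0&0\\ 0&\Delta_K\rix$ is positive semidefinite, and plugging into \eqref{mare} reproduces the Riccati equality by construction. This is exactly the representation displayed in \eqref{arie}.

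There is no real obstacle here; the lemma is essentially a reformulation of the fact that a Hermitian matrix $M$ satisfies $M\le 0$ if and only if $M=-N$ for some $N\ge 0$, lifted to the quadratic Riccati setting via the factorization $\mat{c}X\\ I\rix^H \Delta \mat{c}X\\ I\rix$. The only thing to be careful about is to state clearly that no controllability, observability, or stability hypotheses are used, so that the characterization applies to the full solution set of \eqref{ari}.
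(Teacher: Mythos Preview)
Your proposal is correct and follows exactly the approach of the paper: the two displayed computations in the paragraph preceding the lemma are precisely the sufficiency and necessity arguments you describe, and the lemma is stated as an immediate consequence of them. Your added remark that no controllability, observability, or stability assumptions are needed is accurate and worth keeping.
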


\medskip
In this section we have reviewed {and slightly modified} classical results on the existence of positive (semi)-definite solutions to \eqref{are} and \eqref{ari}. In order to proceed with
the characterization of the solution sets and the feasible set (of $\Delta$), we study in the next section
the eigenvalue perturbation theory for the Hamiltonian matrix \eqref{hammatrix}.

So far all the results are based on the assumption that (\ref{ari}),
(\ref{are}), or {their} subproblems have a positive definite solution. Conditions for the existence
of such a solution, in particular when the associated Hamiltonian matrix has purely eigenvalues, will be provided in the next section.

\section{Eigenvalue perturbation theory for Hamiltonian matrices}\label{sec:evpt}
{In this section we investigate how the perturbation $\Delta$
in \eqref{mare} or $\Delta_K$ in (\ref{areW}) affects the eigenvalues and invariant
subspaces of the Hamiltonian matrix \eqref{hammatrix}. We then give a complete
characterization of the region of $\Delta_K$ such that (\ref{areW}) has a solution, or equivalently,
(\ref{ari}) has a solution.}

We assume first that $(F,G)$ is controllable.
Suppose that (\ref{are}) has a solution ${X_0}$, or equivalently that  (\ref{hsd}) holds.
{We first investigate the situation when the spectrum of $F+G{X_0}$ or $H$} contains purely imaginary
(and zero) eigenvalues. In this case {(using the  Jordan form of $F+G X_0$)} there exists an invertible matrix $M$ such that
\[
M^{-1}(F+G{X_0}) M=\diag(T_1,T_2),
\]
where $T_1$ and $T_2$ contain the eigenvalues with nonzero real parts and zero real parts, respectively.
If we partition $M^{-1}GM^{-H}=[G_{ij}]$ conformably, then
\[
\mat{cc}M^{-1}&0\\0&M^H\rix\mat{cc}F+G{X_0}&G\\0&-(F+G{X_0})^H\rix\mat{cc}M&0\\0&M^{-H}\rix
=\mat{cc|cc}T_1&0&G_{11}&G_{21}^H\\
0&T_2&G_{21}&G_{22}\\\hline
0&0&-T_1^H&0\\ 0&0&0&-T_2^H\rix.
\]
Let $Z_{12}$ be the unique solution of the Sylvester equation
\[
T_1Z_{12}+Z_{12}T_2^H+G_{21}^H=0,
\]
and define the symplectic matrix
\[
Z=\mat{cc|cc}I&0&0&Z_{12}\\ 0&I&Z_{12}^H&0\\\hline 0&0&I&0\\ 0&0&0&I\rix.
\]
Then for $S=\mat{cc}M&0\\ 0&M^{-H}\rix Z$, one has
\begin{equation}\label{dsch}
S^{-1}\mat{cc}F+G{X_0}&G\\0&-(F+G{X_0})^H\rix S =
\mat{cc|cc}T_1&0&G_{11}&0\\
0&T_2&0&G_{22}\\\hline
0&0&-T_1^H&0\\ 0&0&0&-T_2^H\rix.
\end{equation}
In this way, we have decoupled the part that contains all the purely imaginary eigenvalues, which is the  block upper triangular Hamiltonian matrix
$\mat{cc}T_2&G_{22}\\0&-T_2^H\rix$. Since $G\ge 0$, one also has $G_{22}\ge 0$.

Since $(F,G)$ is controllable,
 $(T_2,G_{22})$ is controllable as well. To see this, suppose that $\ii\gamma$ ($\ii=\sqrt{-1}$) is an eigenvalue
of $T_2$
and $x$ is a
left eigenvector such that $x^H \mat{cc}T_2-\ii\gamma I&G_{22}\rix=0$.
Then it follows that ${x^HG_{21}}=0$. If this were not the case, then there
{would exist} a vector $x_1\ne 0$ such that ${x^HG_{21}}x_1\ne 0$ (say,
$x_1={G_{21}^H}x$). Then
\[
\mat{cc}x_1&0\\0&x\rix^H G\mat{cc}x_1&0\\0&x\rix
=\mat{cc}x_1^H G_{11}x_1&x_1^H G_{21}^Hx\\x^H G_{21}x_1 &0\rix
\]
{would be} indefinite, contradicting that $G\ge 0$. We then have
\begin{eqnarray*}
&& \mat{c}0\\x\rix^H M^{-1}{\mat{c|c}F+G{X_0}-\ii\gamma I&G\rix}
\mat{cc}M&0\\0&M^{-H}\rix \\
&&\qquad =\mat{c}0\\x\rix^H\mat{cc|cc}T_1-\ii\gamma I&0&G_{11}&G_{21}^H\\0&T_2-\ii\gamma I&G_{21}&G_{22}\rix=0,
\end{eqnarray*}
which shows that $(F+G{X_0},G)$ is not controllable. But this contradicts the
controllability  of $(F,G)$ due to the relation
\[
\mat{cc}F-\alpha I&G\rix\mat{cc}I&0\\{X_0}&I\rix=\mat{cc}F
+G{X_0}-\alpha I&G\rix
\]
for all $\alpha\in\mathbb C$. In the same way we can show that $(T_1,G_{11})$ is controllable.

{We summarize the observations in the following results.
\begin{theorem}\label{newthm1}
Suppose that the ARE (\ref{are}) has a solution $X_0$, that $(F,G)$ is controllable,
and that the Hamiltonian matrix $H$ corresponding to (\ref{are}) as in (\ref{hammatrix})
is similar to the right hand matrix in (\ref{dsch})
via a similarity transformation with $\mat{cc}I&0\\ X_0&I\rix S$,
where $T_1$ and $T_2$ contain the non-imaginary and purely imaginary eigenvalues,
respectively. Then
there exists a block upper triangular symplectic matrix
\[
\wt Z=\mat{cc}\wt Z_{11}&\wt Z_{12}\\0&\wt Z_{11}^{-H}\rix
\]
such that
\begin{equation}\label{sjord}
\wt Z^{-1} \mat{cc}T_2&G_{22}\\0&-T_2^H\rix \wt Z
=\mat{cc}\wt T&\wt G \\0&-\wt T^H\rix,
\end{equation}
where
\[
\wt T=\diag(N_1(\ii\gamma_1),\ldots,N_r(\ii\gamma_r)),\quad
\wt G =\diag(\wt G_1,\ldots,\wt G_r),
\]
$N_j(\ii\gamma_j)$ is a Jordan block associated with a purely imaginary eigenvalue
$\ii\gamma_j$ of  size $k_j\times k_j$, and $\wt G_j=e_{k_j}e_{k_j}^H$,
for $j=1,\ldots,r$. Here $e_{k_j}$ is the $k_j$th column of the identity
matrix. This implies that $\mat{cc}T_2&G_{22}\\0&-T_2^H\rix$ and $H$ have eigenvalues
$\ii\gamma_1,\ldots,\ii\gamma_r$ (not necessarily distinct) with $\ii\gamma_j$
having a Jordan block of size $2k_j\times 2k_j$ for $j=1,\ldots,r$.

In addition, let
$\mat{cc}U_{11}\\U_{21}\rix$ span an arbitrary Lagrangian invariant subspace of
$\mat{cc}T_1&G_{11}\\0&-T_1^H\rix$.
Then the  ARE \eqref{are} has a  corresponding solution
\[
X=X_0+M^{-H}\mat{cc}U_{21}U_{11}^{-1}&0\\0&0\rix M^{-1}.
\]
\end{theorem}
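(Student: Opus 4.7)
I would handle the two claims of the theorem in sequence. The structural statement about $\wt Z$ and the resulting Jordan data of $H$ is the main point; the solution formula then follows by bookkeeping on Lagrangian subspaces transported through the similarities that produced (\ref{dsch}).

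For the first claim, the three crucial properties of the $T_2$-block have been established just before the theorem: $T_2$ has only purely imaginary eigenvalues, $G_{22}\ge 0$, and $(T_2,G_{22})$ is controllable. Under these assumptions the block upper triangular Hamiltonian matrix $\mat{cc}T_2&G_{22}\\0&-T_2^H\rix$ admits the structured Jordan form developed in \cite{LinMX99,MehX08}. My first step is to choose an invertible $\wt Z_{11}$ bringing $T_2$ into the prescribed Jordan block-diagonal shape $\wt T=\diag(N_j(\ii\gamma_j))$. The second step is to pick $\wt Z_{12}$ so that $\wt Z=\mat{cc}\wt Z_{11}&\wt Z_{12}\\0&\wt Z_{11}^{-H}\rix$ is symplectic (equivalently, $\wt Z_{11}^H\wt Z_{12}$ is Hermitian) and the transformed $(1,2)$-block attains the canonical shape $\wt G=\diag(e_{k_j}e_{k_j}^H)$. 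The freedom in $\wt Z_{12}$ corresponds exactly to adding an arbitrary Hermitian Sylvester correction $\wt TY-Y\wt T^H$ to this block, and the controllability of the transformed pair together with its positive semi-definiteness forces the reduced block to assume the claimed diagonal rank-one form. Counting Jordan chains then yields the eigenvalue structure of $H$: each $N_j(\ii\gamma_j)$ of size $k_j$ in $\wt T$, together with its mirror $-N_j(\ii\gamma_j)^H$, contributes a single Hamiltonian Jordan block of size $2k_j\times 2k_j$ at $\ii\gamma_j$.

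For the second claim, I assemble a full $n$-dimensional Lagrangian invariant subspace of the right-hand side of (\ref{dsch}) by combining the given $n_1$-dimensional Lagrangian invariant subspace $\mat{c}U_{11}\\U_{21}\rix$ of $\mat{cc}T_1&G_{11}\\0&-T_1^H\rix$ with the trivial Lagrangian invariant subspace $\mat{c}I\\0\rix$ of $\mat{cc}T_2&G_{22}\\0&-T_2^H\rix$, which in the block ordering of (\ref{dsch}) amounts to
\[
V=\mat{cc}U_{11}&0\\0&I\\U_{21}&0\\0&0\rix.
\]
Invariance of $V$ with transition matrix $\diag(T_{11},T_2)$ and the Lagrangian condition $V^HJV=0$ are checked directly. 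Pulling $V$ back first by $S=\mat{cc}M&0\\0&M^{-H}\rix Z$ and then by $\mat{cc}I&0\\X_0&I\rix$ yields a Lagrangian invariant subspace $\mat{c}W_1\\W_2\rix$ of $H$. A short block computation using the explicit shear $Z$ gives
\[
W_1=M\mat{cc}U_{11}&0\\Z_{12}^HU_{21}&I\rix,\qquad W_2=X_0W_1+M^{-H}\mat{cc}U_{21}&0\\0&0\rix,
\]
so that $W_1$ is invertible whenever $U_{11}$ is. In forming $X=W_2W_1^{-1}$ the $Z_{12}$-contribution drops out because the $(2,2)$-block of $\mat{cc}U_{21}&0\\0&0\rix$ is zero, leaving exactly the announced expression $X=X_0+M^{-H}\mat{cc}U_{21}U_{11}^{-1}&0\\0&0\rix M^{-1}$.

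The principal obstacle lies in the first claim: verifying that the Sylvester freedom carried by $\wt Z_{12}$, together with controllability and positive semi-definiteness, really drives the $(1,2)$-block to the precise rank-one block-diagonal form $\diag(e_{k_j}e_{k_j}^H)$. This is the substantive content of the structured Jordan canonical form for Hamiltonian matrices with purely imaginary spectrum and is what fixes the even-size $2k_j$ Jordan block count at each purely imaginary eigenvalue of $H$.
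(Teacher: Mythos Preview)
Your proposal is correct and follows essentially the same route as the paper. For the first claim the paper simply invokes Theorem~4.2 of \cite{FreMX02} together with the already-established facts that $(T_2,G_{22})$ is controllable and $G_{22}\ge 0$, whereas you sketch the underlying construction of the structured canonical form; for the second claim both you and the paper assemble the same Lagrangian subspace $V$, pull it back through $\mat{cc}I&0\\X_0&I\rix S$, and read off $X=W_2W_1^{-1}$. The only point you leave implicit is why $U_{11}$ is invertible: this follows from Lemma~\ref{lem1} applied to $\mat{cc}T_1&G_{11}\\0&-T_1^H\rix$ using the controllability of $(T_1,G_{11})$ established just before the theorem (the paper instead applies the lemma globally via controllability of $(F,G)$).
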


\begin{proof}
The decomposition (\ref{sjord}) follows from Theorem 4.2 in \cite{FreMX02} and the fact that $(T_2,G_{22})$
is controllable and $G_{22}\ge 0$. It also implies that the only Lagrangian invariant subspace of
the matrix $\mat{cc}T_2&G_{22}\\0&-T_2^H\rix$ is $\range\mat{c}I\\0\rix$.

Using (\ref{dsch}), any Lagrangian invariant subspace of $H$ can be expressed as the span of the matrix
\[
\mat{cc}I&0\\X_0&I\rix \mat{cc}M&0\\0&M^{-H}\rix
\mat{cc|cc}I&0&0&Z_{12}\\0&I&Z_{12}^H&0\\\hline 0&0&I&0\\ 0&0&0&I\rix
\mat{cc}U_{11}&0\\0&I\\\hline U_{21}&0\\0&0\rix.
\]
Since $(F,G)$ is controllable, by Lemma~\ref{lem4} it corresponds to an ARE solution, which is
$X$.
\end{proof}
}

Here, in particular, if ${X_0}=X_-$ is the solution such that  $F+GX_-$ has all its eigenvalues in the closed left half plane, then $U_{21}U_{11}^{-1}\ge 0$.
Similarly  if ${X_0}=X_+$ is a solution such that all eigenvalues of $F+GX_+$ are in the closed right half plane then $U_{21}U_{11}^{-1}\le 0$. This characterization is similar to the one in \cite{Wil71}.

We have the following corollary.
{
\begin{corollary}\label{newcor1}
If the Hamiltonian matrix $H$ has only purely imaginary
eigenvalues and the ARE \eqref{are} has a solution,
then the solution is unique and in this case $X_-=X_+$.
\end{corollary}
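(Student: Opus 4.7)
The plan is to obtain this as an almost immediate special case of Theorem~\ref{newthm1}. Given the assumed solution $X_0$ of \eqref{are}, I would first apply the similarity in \eqref{dsch} that splits the spectrum of $F+GX_0$ into $T_1$ (the eigenvalues with nonzero real part) and $T_2$ (the purely imaginary eigenvalues). Then I would observe, using \eqref{hsd}, that $\sigma(H) = \sigma(F+GX_0) \cup \sigma(-(F+GX_0)^H)$, so the hypothesis that $H$ has only purely imaginary eigenvalues forces the same for $F+GX_0$, and hence the $T_1$ block is $0\times 0$.

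Next I would invoke the parametrization established at the end of Theorem~\ref{newthm1}, together with the statement from its proof that every Lagrangian invariant subspace of $H$ arises this way when $(F,G)$ is controllable: every solution of \eqref{are} has the form $X = X_0 + M^{-H}\mat{cc}U_{21}U_{11}^{-1} & 0 \\ 0 & 0\rix M^{-1}$, where $\mat{c}U_{11}\\U_{21}\rix$ spans a Lagrangian invariant subspace of $\mat{cc}T_1 & G_{11} \\ 0 & -T_1^H\rix$. With $T_1$ vacuous this correction term is itself vacuous, and the unique solution is $X_0$. Consequently the extremal solutions $X_-$ and $X_+$, which are solutions of \eqref{are}, must both coincide with $X_0$, giving $X_- = X_+$.

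I do not anticipate any substantive obstacle, since the corollary is essentially a degenerate special case of the theorem. The only point meriting mild care is the consistency of the construction when $T_1$ is empty --- specifically, that the Sylvester equation defining $Z_{12}$, the symplectic matrix $Z$, and the subspace parametrization all reduce correctly to the statement that the Hamiltonian block $\mat{cc}T_2 & G_{22} \\ 0 & -T_2^H\rix$ admits the single Lagrangian invariant subspace $\range\mat{c}I\\0\rix$, as observed in the proof of Theorem~\ref{newthm1} via \eqref{sjord}.
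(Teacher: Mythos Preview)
Your proposal is correct and follows essentially the same approach as the paper: the paper's proof simply states that the result follows from Theorem~\ref{newthm1} because the block $T_1$ and the matrix $\mat{c}U_{11}\\U_{21}\rix$ are void. Your argument is a slightly more detailed version of this, making explicit why $T_1$ is empty (via \eqref{hsd}) and why the parametrization then collapses to the single solution $X_0$.
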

\begin{proof} This follows  from Theorem~\ref{newthm1} by using the fact that in this case the blocks $T_1$ as well
as $\mat{c}U_1\\U_2\rix$ are void.
\end{proof}

The analysis in this subsection shows that, from a perturbation theory point of view, when all the eigenvalues are converging to the imaginary axis  (by perturbing $G,K$) then all different solutions of \eqref{are} converge to a single solution.

The situation of general perturbations is discussed in the following subsections.}
\subsection{General perturbations}\label{subs1}
{In order to study general perturbations to the Hamiltonian matrix that are associated with the perturbation~\eqref{mare}, we first provide some preparatory results. For this we consider different subcases of perturbations, the perturbation of  purely imaginary eigenvalues and the perturbation of non-imaginary eigenvalues. After studying the perturbation theory for purely imaginary eigenvalues, we move non-imaginary eigenvalues to the imaginary axis and then freeze them for further perturbation and in this way we successively move all eigenvalues to the imaginary axis without destroying the solvability of the ARI.

To proceed like this,} we consider a Hamiltonian matrix that is already in the block upper triangular form
\begin{equation}\label{h0}
H_0=\mat{cc}F&G\\0&-F^H\rix,
\end{equation}
where $G=G^H\ge 0$ and $(F,G)$ is controllable. Let
$\Delta=\Delta^H=\mat{cc} \Delta_{11}& \Delta_{21}^H\\ \Delta_{21}&\Delta_{22}\rix\ge 0$
and consider the perturbed matrix $H_0+J\Delta $.

{We first show that if
 $(F, \Delta_{11})$ is not observable, then the unobservable part can be removed and we can consider a smaller problem}. Using a simultaneous congruence
transformation with a unitary matrix, $F$ and $\Delta_{11}$ can
be transformed to  the form
\eq{detfdel}
\mat{cc}F_{11}&F_{12}\\0&F_{22}\rix,\qquad  \mat{cc}0&0\\0& \wt\Delta_{22}\rix,
\en
where $(F_{22},\wt \Delta_{22})$ is observable. {This transformation is similar to the
forms for $F$ and $K$ in (\ref{minf}) but with the block rows and columns reversed.}
Without loss of generality we may assume that
both matrices are already in the given block form. Partition
\eq{partg}
G=\mat{cc}G_{11}&G_{21}^H\\ G_{21}&G_{22}\rix
\en
accordingly  and repartition
\eq{partdel}
\Delta=\mat{cc|cc}0&0&0&0\\0&\wt\Delta_{22}&\wt\Delta_{32}^H&\wt\Delta_{42}^H\\
\hline
0&\wt\Delta_{32}&\wt\Delta_{33}&\wt\Delta_{43}^H\\0&\wt\Delta_{42}&
\wt \Delta_{43}&\wt\Delta_{44}\rix
\en
conformably. The zero blocks
in $\Delta$ are a result of the block form of $\Delta_{11}$ and the fact that $\Delta\ge 0$. Note that the zero block structure also shows
that $(F+\Delta_{21},\Delta_{11})$ is not observable. Then
\[
H_0+J\Delta=\mat{cc|cc}F_{11}&F_{12}+\wt \Delta_{32}&G_{11}+\wt\Delta_{33}&(G_{21}+\wt \Delta_{43})^H\\
0&F_{22}+\wt \Delta_{42}&G_{21}+\wt \Delta_{43}&G_{22}+\wt \Delta_{44}\\
\hline
0&0&-F_{11}^H&0\\
0&-\wt\Delta_{22}&-(F_{12}+\wt\Delta_{32})^H&-(F_{22}+\wt \Delta_{42})^H\rix
\]
Clearly, the eigenvalues of $F_{11}$ and $-F_{11}^H$ are not affected by the perturbation matrix $\Delta$, the affected eigenvalues are those of the submatrix
\[
\mat{cc}
 F_{22}+\wt\Delta_{42}&G_{22}+\wt \Delta_{44}\\
-\wt \Delta_{22}&-(F_{22}+\wt \Delta_{42})^H\rix
=\mat{cc}F_{22}&G_{22}\\0&-F_{22}^H\rix+J\mat{cc}\wt\Delta_{22}&\wt\Delta_{42}^H
\\\wt\Delta_{42}&\wt\Delta_{44}\rix.
\]
Note that since $\mat{cc}\wt\Delta_{22}&\wt\Delta_{24}\\\wt\Delta_{24}^H&\wt\Delta_{44}\rix$ is a submatrix of $\Delta$, it is positive semidefinite.

Based on the previous analysis, we have the following observations.
\begin{enumerate}
\item[a.] For the perturbation analysis we may assume that $(F,\Delta_{11})$ is observable,
since otherwise we may work on a reduced size problem.
\item[b.] We are able to construct a perturbation {$\Delta$ with $(F,\Delta_{11})$ unobservable}
to perturb some eigenvalues
of $H_0$ while keeping the remaining eigenvalues unchanged.
\end{enumerate}

In order to simplify the perturbation theory we consider
\begin{equation}\label{perham}
H(t)=H_0+tJ\Delta,\qquad t\ge 0,
\end{equation}
We therefore assume that $(F, \Delta_{11})$ is observable
and that $(F,G)$ is controllable.

{Next,
we show that for sufficiently small $t>0$,  $H_0+tJ\Delta$ can then be decomposed (via a structured similarity transformation) as a direct sum
of two Hamiltonian matrices of  smaller sizes that contain different eigenvalues.
This helps to single out purely imaginary
eigenvalues which is necessary for the forthcoming analysis.
Note that in the case of real coefficients  this can be done by separating complex conjugate pairs of eigenvalues from the other eigenvalues.

We therefore assume without loss of generality that}
\begin{equation}\label{bab}
F=\mat{cc}F_{11}&0\\0&F_{22}\rix,\qquad G=\mat{cc}G_{11}&0\\0&G_{22}\rix,
\end{equation}
and that
$(\Lambda(F_{11})\cup\Lambda( -F_{11}^H))\cap(\Lambda(F_{22})\cup\Lambda(-F_{22}^H))=\emptyset$, where $\Lambda (F) $ denotes the spectrum of the matrix $F$. Partition
\begin{equation}\label{defe}
\Delta=\mat{cc|cc}\Delta_{11}&\Delta_{21}^H&\Delta_{31}^H&\Delta_{41}^H\\\Delta_{21}&\Delta_{22}&\Delta_{32}^H&\Delta_{42}^H\\\hline
\Delta_{31}&\Delta_{32}&\Delta_{33}&\Delta_{43}^H\\
\Delta_{41}&\Delta_{42}&\Delta_{43}&\Delta_{44}\rix
\end{equation}
conformably.

{ To apply the previous results, we }show that the  controllability of $(F,G)$ implies the controllability of
$(F_{11},G_{11})$ and $(F_{22},G_{22})$, and the observability of
$\left(F,\mat{cc}\Delta_{11}&\Delta_{21}^H\\\Delta_{21}&\Delta_{22}\rix\right)$ implies the observability of $(F_{11},\Delta_{11})$ and $(F_{22},\Delta_{22})$.

The controllability of $(F_{11},G_{11})$ and $(F_{22},G_{22})$
is obvious. For the observability statement, suppose that $(F_{11},\Delta_{11})$ is not observable.
Then there is an eigenvector $x\ne 0$ of $F_{11}$ such that $F_{11}x=\lambda x$ and
$\Delta_{11}x=0$. Due to the positive semidefiniteness of $\Delta $, one also has $\Delta_{21} x=0$.
Then
\[
F\mat{c}x\\0\rix=\lambda\mat{c}x\\0\rix,\quad \mat{cc}\Delta_{11}&\Delta_{21}^H\\\Delta_{21}&\Delta_{22}\rix
\mat{c}x\\0\rix=0,
\]
which is a contradiction. The observability of $(F_{22},\Delta_{22})$ follows in the same
way.

{Using the block structures in  \eqref{bab} and  \eqref{defe}, we see that
$H(t)$ defined in \eqref{perham} can be expressed as}
\[
 H(t)=\mat{cc|cc}F_{11}+t\Delta_{31}&t\Delta_{32}&G_{11}+t\Delta_{33}&t\Delta_{43}^H\\
t\Delta_{41}&F_{22}+t\Delta_{42}&t\Delta_{43}&G_{22}+t\Delta_{44}\\\hline
-t\Delta_{11}&-t\Delta_{21}^H&-(F_{11}+t\Delta_{31})^H&-t\Delta_{41}^H\\
-t\Delta_{21}&-t\Delta_{22}&-t\Delta_{32}^H&-(F_{22}+t\Delta_{42})^H\rix.
\]
Let $P$ be a permutation
matrix that interchanges the middle two block columns when it is postmultiplied to a matrix
of the block form as $H(t)$. Then
\[
\wt J=P^H J P=\mat{cc}J_1&0\\0&J_2\rix,
\]
where $J_1$ and $J_2$ are version of $J$ of smaller size, and
\begin{eqnarray*}
\wt H(t)&:=&P^H H(t) P=
\mat{cc|cc}F_{11}+t\Delta_{31}&G_{11}+t\Delta_{33}&t\Delta_{32}&
t\Delta_{43}^H\\
-t\Delta_{11}&-(F_{11}+t\Delta_{31})^H&-t\Delta_{21}^H&-t\Delta_{41}^H\\\hline
t\Delta_{41}&t\Delta_{43}&F_{22}+t\Delta_{42}&G_{22}+t\Delta_{44}\\
-t\Delta_{21}&-t\Delta_{32}^H&-t\Delta_{22}
&-(F_{22}+t\Delta_{42})^H\rix\\
&=:&\mat{c|c} H_1(t)&tJ_1 L^H J_2\\\hline tL&H_2(t)\rix.
\end{eqnarray*}
Note that both $H_1(t)$ and $H_2(t)$ are Hamiltonian matrices.

Consider the quadratic matrix equation
\[
H_2(t)Y(t)-Y(t)H_1(t)-tY(t)J_1L^H J_2Y(t)+tL=0.
\]
{Since $\Lambda(H_1(0))=\Lambda(F_{11})\cup\Lambda(-F_{11}^H)$ and
$\Lambda(H_2(0))=\Lambda(F_{22})\cup\Lambda(-F_{22}^H)$, by our assumptions we have
$\Lambda(H_1(0))\cap\Lambda(H_2(0))=\emptyset$. Due to the continuity of the eigenvalues as functions of $t$, it follows that when $t$ is sufficiently small, then
$\Lambda(H_1(t))\cap\Lambda(H_2(t))=\emptyset$ and then
this equation has a unique solution $Y(t)$ with $\nrm{Y(t)}=\mathcal O(t)$.} Furthermore,  the matrices
\[
I-J_1Y^H(t) J_2Y(t),\quad I-Y(t)J_1Y^H(t) J_2
\]
are skew-Hamiltonian, and have the form $I+\mathcal O(t^2)$.

Using the structured Jordan canonical form, both skew-Hamiltonian matrices
have a principal skew-Hamiltonian square root
$S_1(t),S_2(t) = I+\mathcal O(t^2)$, see  \cite{FasMMX99}, satisfying
\[
S_1^2(t)=J_1^H S_1^H(t) J_1 S_1(t)=I-J_1Y^H(t) J_2Y(t),\quad
S_2^2(t)=J_2^H S_2^H(t) J_2S_2(t)=I-Y(t)J_1Y^H(t) J_2.
\]
Defining
\[
S(t)=\mat{cc}I&J_1Y^H(t) J_2\\Y(t)&I\rix\mat{cc}S_1^{-1}(t)&0\\0&S_2^{-1}(t)\rix,
\]
one has
\[
S^{-1}(t)=\mat{cc}S_1^{-1}(t)&0\\0&S_2^{-1}(t)\rix\mat{cc}I&-J_1Y^H(t) J_2\\-Y(t)&I\rix,\quad
S^H (t) \wt J S(t)=\wt J ,
\]
so that $P^H S(t)P$ is symplectic.

Using $S(t)$ and the fact that  $H_1(t)$ and $H_2(t)$ are Hamiltonian, we can eliminate
 the blocks $tL$ and $tJ_1L^H J_2$ in $\wt H(t)$
via
\begin{eqnarray*}
S^{-1}(t)\wt H(t)S(t)
&=&\mat{cc}S_1(t)(H_1(t)+tJ_1 L^H J_2Y(t))S_1^{-1}(t)&0\\
0&S_2^{-1}(t)(H_2(t)-tY(t)J_1L^H J_2) S_2(t)\rix\\
&=&\mat{cc}\wt H_1(t)&0\\0&\wt H_2(t)\rix,
\end{eqnarray*}
and by construction, both $\wt H_1(t)$ and $\wt H_2(t)$ are still Hamiltonian. Because
$S^H(t)\wt JS(t)=\wt J$, we also have
\[
S^{-1}(t)\wt H(t)S(t)=\wt J^H S^H(t)\wt J \wt H(t)S(t),
\]
from which we get
\begin{eqnarray}
\nonumber
\wt H_1(t)&=&J_1^H S_1^{-H }(t)\mat{c}I\\Y(t)\rix^H \wt J\wt H(t)\mat{c}I\\Y(t)\rix S_1^{-1}(t)\\
\nonumber
&=&J_1^H S_1^{-H }(t)(J_1H_1(t)-t L^H J_{2}Y(t)-t Y^H(t)J_2^H L
+Y^H(t) J_2H_2(t)Y(t))S_1^{-1}(t),\\
\label{h1}
\wt H_2(t)&=&J_2^H S_2^{-H}(t)\mat{c}J_1Y^H(t) J_2\\I\rix^H\wt J{\wt H(t)}
\mat{c}J_1Y^H(t)J_2\\I\rix S_2^{-1}(t)\\
\nonumber
&=&{J_2^H} S_2^{-H }(t)J_2^H[H_2(t)J_2+tY(t)J_1L^H+tLJ_1^H Y^H(t)
+Y(t)H_1(t)J_1Y^H(t)]J_2S_2^{-1}(t).
\end{eqnarray}
The Hamiltonian structure of $\wt H_1(t)$, $\wt H_2(t)$ is obvious. 

In this subsection we have reformulated the
Hamiltonian matrix that is obtained after perturbing the Riccati inequality \eqref{ari}. In the next subsection we use this reformulation to study the perturbation theory for the eigenvalues.
\subsection{Perturbations of a purely imaginary eigenvalue}
In this subsection we begin our analysis of  the perturbation of the eigenvalues of $H_0+J\Delta$ with $H_0$ as in \eqref{h0} in the transformed  version \eqref{h1}.
We first consider the case where $F_{11}$ as in (\ref{bab}) has a
single purely imaginary eigenvalue that is different from the eigenvalues of  $F_{22}$.
We  consider how the eigenvalues of $F_{11}$ and $-F_{11}^H$ are perturbed
in $H(t)$ as in (\ref{perham}) with $\Delta=\Delta ^H\ge 0$ given in (\ref{defe}).
Again, we assume that $(F_{11},G_{11})$ is controllable and  $(F_{11},\Delta_{11})$
is observable.
{
\begin{lemma}\label{newlem1}
Suppose that $(F_{11},G_{11})$ is controllable and  $(F_{11},\Delta_{11})$
is observable, and assume that $F_{11}$ has $s_\rho$ Jordan blocks of the size
$\rho\times \rho$ corresponding to the single purely imaginary eigenvalue $\ii\alpha$
for $\rho=1,\ldots,k$. Then
for sufficiently small $t>0$, all the eigenvalues of $\wt H_1(t)$ defined in \eqref{h1}
are semisimple. For each $\rho$,  there are exactly $s_\rho$ purely imaginary eigenvalues
of the form
\[
\lambda_{i,+}^{(\rho)}(t)=\ii (\alpha+(t\gamma_i^{(\rho)})^{\frac1{2\rho}})+
\mathcal O(t^{\frac1{\rho}}),\quad \quad i=1,\ldots,s_\rho
\]
and exactly $s_\rho$ purely imaginary eigenvalues of the form
\[
\lambda_{i,-}^{(\rho)}(t)=\ii (\alpha-(t\gamma_i^{(\rho)})^{\frac1{2\rho}})+
\mathcal O(t^{\frac1{\rho}}),\quad \quad i=1,\ldots,s_\rho,
\]
where $\gamma_i^{(\rho)}>0$ for $i=1,\ldots,s_\rho$. Suppose that the
columns of full column rank matrices $V_+^{(\rho)}(t)$ and $V_-^{(\rho)}(t)$ span the
invariant subspaces of $\wt H_1(t)$ corresponding to $\{\lambda_{i,+}^{(\rho)}(t)\}$
and $\{\lambda_{i,-}^{(\rho)}(t)\}$, respectively. Then
\[
\ii (V_+^{(\rho)}(t))^HJ_1V_+^{(\rho)}(t)<0,\qquad \ii (V_-^{(\rho)}(t))^HJ_1V_-^{(\rho)}(t)>0.
\]
\end{lemma}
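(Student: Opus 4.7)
The plan is to replace $\wt H_1(t)$ by its leading-order approximation, put the unperturbed Hamiltonian matrix into a structured canonical form adapted to the single purely imaginary eigenvalue $\ii\alpha$, and then apply the structured Puiseux / Newton-diagram expansion of \cite{MehX08}, finishing with a sign-characteristic argument.

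\textbf{Step 1 (reduction).} From (\ref{h1}) together with $Y(t)=\mathcal O(t)$, $S_1(t)=I+\mathcal O(t^2)$, and $J_1^HJ_1=I$ one obtains
\[
\wt H_1(t)=H_1(t)+\mathcal O(t^2),\qquad H_1(t)=H_1(0)+tJ_1\widehat\Delta_1,
\]
where $H_1(0)=\mat{cc}F_{11}&G_{11}\\0&-F_{11}^H\rix$ and $\widehat\Delta_1=\mat{cc}\Delta_{11}&\Delta_{31}^H\\\Delta_{31}&\Delta_{33}\rix\ge 0$ is a principal submatrix of $\Delta$. Since an $\mathcal O(t^2)$ remainder cannot alter any Puiseux exponent of the form $t^{1/(2\rho)}$ with $\rho\ge 1$, it suffices to treat $H_1(t)$, and the controllability/observability hypotheses transfer verbatim.

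\textbf{Step 2 (Hamiltonian Jordan form of $H_1(0)$).} Because $G_{11}\ge 0$, $(F_{11},G_{11})$ is controllable, and $F_{11}$ has only the eigenvalue $\ii\alpha$ with $s_\rho$ blocks of size $\rho$, Theorem~4.2 of \cite{FreMX02} (which underlies Theorem~\ref{newthm1} above) supplies a symplectic similarity bringing $H_1(0)$ to the block-diagonal form whose blocks are
\[
\mat{cc}N_\rho(\ii\alpha)&e_\rho e_\rho^H\\0&-N_\rho(\ii\alpha)^H\rix,
\]
so that each $\rho\times\rho$ Jordan block of $F_{11}$ is glued with the corresponding block of $-F_{11}^H$ into a single $2\rho\times 2\rho$ Hamiltonian Jordan block at $\ii\alpha$ of definite sign characteristic (inherited from the positivity of $G_{11}$ on the chain). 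Transporting $tJ_1\widehat\Delta_1$ through the same similarity yields a positive semidefinite $\widehat\Delta_1'\ge 0$.

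\textbf{Step 3 (structured Puiseux expansion and positivity of $\gamma_i^{(\rho)}$).} On a single $2\rho\times 2\rho$ Hamiltonian Jordan block the theorem of \cite{MehX08} gives a Newton diagram with a single segment of slope $1/(2\rho)$. The indicial equation for the leading coefficient $\mu$ takes the form $\mu^{2\rho}=(-1)^\rho \gamma$, whose $2\rho$ roots include exactly the two purely imaginary values $\pm\ii\gamma^{1/(2\rho)}$ (the remaining $2\rho-2$ roots lie off the axis in the Hamiltonian-symmetric quadruples that will be addressed in the next subsections and are absorbed into the $\mathcal O(t^{1/\rho})$ correction). Collecting the $s_\rho$ blocks of common size $2\rho$ produces an $s_\rho\times s_\rho$ Hermitian matrix $M_\rho$ whose entries are the couplings of $\widehat\Delta_1'$ to the top-of-chain vectors of the Jordan basis; its eigenvalues are precisely $\gamma_1^{(\rho)},\ldots,\gamma_{s_\rho}^{(\rho)}$. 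Positive semidefiniteness of $\widehat\Delta_1'$ gives $M_\rho\succeq 0$, and the observability of $(F_{11},\Delta_{11})$ rules out any nontrivial kernel of $M_\rho$ (since a kernel vector would furnish a joint eigenvector of $F_{11}$ annihilated by $\Delta_{11}$). Hence $M_\rho\succ 0$, so $\gamma_i^{(\rho)}>0$ for all $i,\rho$, and the claimed expansions follow.

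\textbf{Step 4 (semisimplicity and sign characteristic).} Because the leading coefficients across all blocks are strictly positive and distinct (or can be separated by a further similarity if they coincide), the perturbed eigenvalues from each group are simple for small $t>0$, hence semisimple. The sign assertion $\ii(V_\pm^{(\rho)}(t))^HJ_1V_\pm^{(\rho)}(t)\lessgtr 0$ is the classical splitting of sign characteristic at a Hamiltonian Jordan block of definite positive type under a Hermitian positive semidefinite perturbation of $JH$: the two purely imaginary branches inherit opposite signs of the indefinite form $[x,y]=\ii y^HJ_1x$, and a direct computation on the $2\times 2$ prototype $H_1(0)=\mat{cc}0&1\\0&0\rix$, $\widehat\Delta_1=\diag(\delta_1,\delta_2)$ (giving $\lambda_\pm=\pm\ii\sqrt{t\delta_1(1+t\delta_2)}$) shows that the upper branch carries the negative sign and the lower branch the positive sign; the general case reduces to this prototype block by block via the canonical form of Step~2.

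\textbf{Main obstacle.} The delicate point is Step~3: explicitly identifying the Hermitian matrix $M_\rho$ whose eigenvalues are the leading Puiseux coefficients, and upgrading $M_\rho\succeq 0$ to $M_\rho\succ 0$ by a careful use of observability. This requires tracking which entries of $\widehat\Delta_1'$ actually appear in the lowest-order coefficient of the structured Newton diagram of \cite{MehX08} and arguing that the induced quadratic form on the top-of-chain subspace is definite rather than merely semidefinite.
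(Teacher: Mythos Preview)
Your overall skeleton (reduce to $H_1(t)$, put $H_1(0)$ in Hamiltonian Jordan form, run a Puiseux/Lidskii expansion, read off sign characteristics) is exactly the paper's strategy, but two of your steps contain genuine gaps.

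\textbf{Step 3.} Your matrix $M_\rho$, described as the restriction of $\widehat\Delta_1'$ to the ``top-of-chain vectors'' of the size-$\rho$ blocks, is not the right object when $F_{11}$ has Jordan blocks of several sizes. In that situation the Lidskii--Moro--Burke--Overton theory (and its refinement in \cite{Xu23a}) shows that the leading coefficients $\gamma_i^{(\rho)}$ are the eigenvalues of a \emph{Schur complement} $\mathcal S_\rho$ of the full matrix $\Pi$ obtained by restricting $\Delta_{11}$ to the eigenspace of $F_{11}$; only for the largest block size is it a plain principal submatrix. Your observability argument correctly gives $\Pi>0$, and this does imply $(-1)^\rho\mathcal S_\rho>0$, but you need to identify the Schur complement explicitly to get the right $\gamma_i^{(\rho)}$. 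Also, the remaining $2\rho-2$ roots of $\mu^{2\rho}=(-1)^\rho$ are not ``absorbed into the $\mathcal O(t^{1/\rho})$ correction''; they generate $2(\rho-1)s_\rho$ genuine eigenvalues of $H_1(t)$ off the axis.

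\textbf{Step 4.} Two problems. First, the $\gamma_i^{(\rho)}$ need not be distinct, and you cannot ``separate them by a further similarity'': similarity does not change eigenvalues. The paper instead gets semisimplicity from the fact that $\mathcal S_\rho$ is Hermitian, invoking \cite{Xu23a}. Second, and more seriously, the Puiseux expansion only shows the eigenvalues are purely imaginary \emph{to leading order}; the $\mathcal O(t^{1/\rho})$ remainder could in principle push them off the axis, so the $2\times2$ prototype does not settle the question, and the perturbation couples blocks so a block-by-block reduction is not available. The paper closes this gap by first computing, from the explicit eigenvector expansions of \cite{Xu23a}, that the invariant subspace $V(t)$ for the group $\{\lambda_{i,+}^{(\rho)}(t)\}$ satisfies $\ii V(t)^HJ_1V(t)=-2\rho\,t^{(2\rho-1)/(2\rho)}\Omega_{\rho,+}^{2\rho-1}+\mathcal O(t)<0$, and then arguing by contradiction: a non-imaginary eigenvalue $\lambda$ would, by Hamiltonian symmetry, have an eigenvector $x$ with $x^HJ_1x=0$, which is impossible once $\ii V(t)^HJ_1V(t)$ is definite. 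That definiteness computation is the missing ingredient in your Step 4.
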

}
{
\begin{proof}
The proof is presented in the Appendix.
\end{proof}
}

{
Lemma~\ref{newlem1} shows when $t$ is sufficiently small, then
$\wt H_1(t)$ has exactly $\sum_{\rho=1}^k2s_\rho$
semisimple purely imaginary eigenvalues, half of which are above $\ii\alpha$
(on the imaginary axis) with the corresponding
structure inertia indices $-1$ and half are below $\ii\alpha$ with the
corresponding structure inertia indices $1$, e.g. \cite{FreMX02,MehX08}.
As a consequence, $\wt H_1(t)$ does not have a Lagrangian invariant
subspace when $t$ is sufficiently small but positive, see e.g., \cite{FreMX02,MehX08}.
}

{We remark that based on the proof, in $\Delta$ only the block $\Delta_{11}$ plays
a role, because the set $\{\gamma_i^{(\rho)},\,i=1,\ldots,s_\rho,\,\rho=1,\ldots,k\}$ is determined by $\Delta_{11}$
only. The other blocks can be set to zero.}

\subsection{Perturbation theory for pre-existing purely imaginary eigenvalues}
Suppose that a general Hamiltonian matrix $H$ has a semisimple purely imaginary eigenvalue
$i\alpha$ with algebraic multiplicity $r$. Let $V$ be a full column rank matrix
such that
\[
HV=\ii\alpha V,\quad W:=\imath V^H JV < 0.
\]
The inequality indicates that the structure inertia indices corresponding to $\ii\alpha$ are all $-1$, see, \cite{FreMX02,LinMX99}.

Let $\Delta\ge 0$ and consider the perturbed Hamiltonian matrix $H(t)=H+tJ\Delta $. Following Theorem 3.2 in
\cite{MehX08}, for sufficiently small $t$, $H(t)$ has $r$ semisimple purely imaginary eigenvalues as
\begin{equation}\label{perteig}
\lambda_j(t)=\imath (\alpha+\delta_jt)+\mathcal O(t^2),\qquad j=1,\ldots,r,
\end{equation}
where $\delta_1,\ldots,\delta_r$ are the eigenvalues of the pencil $\lambda W+V^H \Delta V$.
Since $W<0$ and $V^H \Delta V\ge 0$, all $\delta_1,\ldots,\delta_r\ge 0$. If
${V^H \Delta  V}\ne 0$,
at least the imaginary part of some of $\lambda_1(t),\ldots,\lambda_r(t)$ is increasing.
Based on the first order
perturbation theory, there is a matrix $V(t)=V+\mathcal O(t)$ such that
\begin{equation}\label{pertinv}
H(t)V(t)=V(t)\diag(\lambda_1(t),\ldots,\lambda_r(t)),
\end{equation}
and
\[
W(t):=\imath V(t)^H JV(t)=\imath V^H JV+\mathcal O(t)<0.
\]
Similarly, when $W>0$ for some full rank matrix $V$ as above, $H(t)$ has
$r$ eigenvalues as those in (\ref{perteig}) but $\delta_1,\ldots,\delta_r\le 0$.
Also in this case there is a matrix $V(t)=V+\mathcal O(t)$ satisfying (\ref{pertinv}), but the corresponding $W(t)>0$.

Combining these observations with the results in the previous subsection, we have the global picture. Suppose that $H_0$
already has purely imaginary eigenvalues. Consider a small perturbation $J\Delta $ with
$\Delta=\mat{cc} \Delta_{11}&\Delta_{21}^H\\ \Delta_{21} &
\Delta_{22}\rix\ge 0$. If $Fx=\ii\alpha x$ and $ \Delta_{11}x\ne 0$
at least for some  purely imaginary eigenvalue $\ii\alpha$ of $F$, then $H_0+J\Delta$
still will have purely imaginary eigenvalues that are semisimple. The eigenvalues with the maximum imaginary parts must have structure inertia indices $-1$ and those with the minimum imaginary
parts must have structure inertia indices $1$.  If we keep adding perturbations $J\Delta $
with $\Delta \ge 0$, this will keep purely imaginary eigenvalues on the imaginary axis. Those with
negative structure inertia indices will be nondecreasing and those with positive structure inertia
indices will be nonincreasing. Some purely imaginary eigenvalues with opposite structure inertia
indices may collide and leave the imaginary axis by adding $J\Delta $ with $\Delta>0$ (\cite{MehX08}). However, based on the properties shown above, the extreme purely imaginary eigenvalues never have such a chance and will stay on the imaginary axis and keep the structure inertia indices.

In summary, once $H_0$ has purely imaginary eigenvalues, then $H_0+J\Delta$ will not have a Lagrangian
invariant subspace for any $\Delta\ge 0$, unless the purely imaginary eigenvalues $\ii\alpha$ of $F$ are
\emph{non-detectable} in $(F, \Delta_{11})$, i.e. $\rank [\ii\alpha I-F,\Delta_{11}]<n$.
\subsection{Perturbation of  eigenvalues that are not purely imaginary}
{Suppose that we have constructed a Hamiltonian matrix $H_0$ in (\ref{h0})  from $H$ given in (\ref{hammatrix})
by the transformation given in (\ref{hsd}) with a Hermitian solution $X$ of (\ref{are}).
By Lemmas~\ref{lem1} and Lemma~\ref{lem4}, the ARE
\[
F^HX+XF+XGX=0
\]
has two extreme solutions $X_-$ and $X_+$ (not necessarily positive semidefinite)
so that $F+GX_-$ and $-(F+GX_+)$
are asymptotically stable and $X_-\le X_+$.

In this subsection, we will show that by adding a matrix $J\Delta$ to $H_0$, with a nonzero perturbation $\Delta\ge 0$,
the matrix $H(t)=H_0+tJ\Delta$ will eventually have purely imaginary eigenvalues (including maybe the eigenvalue zero) when $t>0$ is sufficiently large.

Define the {\em feasible region of perturbations} for
the ARI (\ref{ari}) as the collection of all positive semidefinite matrices $\Delta$ such that
the ARE (\ref{mare}) has a Hermitian solution $X$.

In the following, we restrict our perturbations to the form $\Delta =\mat{cc}\Delta_{11}&0\\0&0\rix\ge 0$  with nonzero
$\Delta_{11}$.
Then for sufficiently large $t>0$ the matrix $H(t)=H_0+tJ\Delta$ has no Lagrangian invariant
subspace or equivalently the ARE
\begin{equation}\label{delta11t}
F^HX+XF+XGX+t\Delta_{11}=0
\end{equation}
has no Hermitian solution. If this would not be the case,  then \eqref{delta11t} has a solution $X(t)$ for all $t>0$.
Then, using the submultiplicativity of the spectral norm, we have
\[
t\nrm{\Delta_{11}}_2\le 2\nrm{F}_2\nrm{X(t)}_2+\nrm{G}_2\nrm{X(t)}_2^2.
\]
Following Lemma~\ref{lem4}, $X_-\le X(t)\le X_+$.  Then from
\[
X_+-X_-=(X_+-X(t))+(X(t)-X_-)
\]
one has $\nrm{X_+-X(t)}_2\le \nrm{X_+-X_-}_2$.
This implies that
\[
\nrm{X(t)}_2\le \nrm{X_+}_2+\nrm{X_+-X(t)}_2\le \nrm{X_+}+\nrm{X_+-X_-}_2=:\beta.
\]
Then
\[
t\nrm{\Delta_{11}}_2\le 2\nrm{F}_2\beta+\nrm{G}_2\beta^2,
\]
which is clearly not the case when $t$ is sufficiently large.

So let
$\Delta_0\ge 0$ restricted to the form $\mat{cc}\Delta_{11}&0\\0&0\rix$ be in the feasible region of the ARI.
For any such matrix $\Delta_0$, if $H+J\Delta_0$
still has no purely imaginary eigenvalues, then we may increase the magnitude of the
perturbation in the direction of $\Delta_0$  by considering $H+tJ\Delta_0$ for $t>1$,
until for a certain $t_0>1$, some eigenvalues of $H+t_0J\Delta_0$ are on the imaginary axis.
The matrix $t_0\Delta_0$
then touches the boundary of the feasible region.

We then choose
another $\Delta_1\ge 0$ such that the purely imaginary eigenvalues of $H+t_0\Delta_0$
remain the same in $H+(t_0\Delta_0+t\Delta_1)$. The ARE corresponding to
$H+(t_0\Delta_0+t\Delta_1)$ has solutions for $0\le t\le t_1$ but $H+(t_0\Delta+t_1\Delta_1)$
has more imaginary eigenvalues. Clearly,  $t_0\Delta_0+t\Delta_1$ stays  on the boundary
of the feasible region for $0\le t\le t_1$.

We then choose a perturbation $\Delta_2$ in the same way and continue.
Eventually, after a finite number of steps,
we have a $\Delta_F\ge 0$ such that all the eigenvalues of $H+J\Delta_F$ are purely imaginary
and the corresponding ARE has a single solution, i.e., the two extremal solutions coincide.
The resulting $\Delta_F$ is a vertex of the feasible region and cannot be perturbed further
by adding any positive semidefinite matrices. Note, however, that  the feasible region may have
several vertices.

In this way we are able to characterize boundary vertices of the feasible region of the ARI (\ref{ari}). Note that along the
paths described above, the ARE (\ref{mare}) always has two extremal solutions.
The minimal solution is nondecreasing and the maximal solution is nonincreasing
(both in the Loewner ordering) starting from $X_-$ and $X_+$, respectively, and eventually
they become the same.

\medskip
Consider the following example to illustrate the perturbation process.

\begin{example}\label{exam2}\rm
Consider the ARI (\ref{ari}) with
\[
F=\mat{cc}-3&-1\\-1&-5\rix,\quad G=\mat{cc}1&0\\0&1\rix,\quad K=\mat{cc}6&8\\8&17\rix.
\]
We have $\Lambda(F) =\{-4\pm\sqrt{2}\}$, $G,K>0$. So $F$ is asymptotically stable,
$(F,G)$ is controllable and $(F,K)$ is observable.
For the corresponding Hamiltonian matrix $H$, one has $\Lambda(H)=\{-2,-3,2,3\}$.
The ARE (\ref{are}) has two extremal solutions $X_-=\mat{cc}1&1\\1&2\rix$
and $X_+=\mat{cc}5&1\\1&8\rix$.
The set of real matrices $\Delta=\mat{cc}a&c\\c&b\rix \ge 0$ with
\eq{exare}
F^TX+XF+XGX+K+\Delta =0
\en
having Hermitian solutions can be characterized by the eigenvalue locations
of the Hamiltonian matrix
\[
H_\Delta :=H+\mat{cc}0&0\\-\Delta&0\rix,
\]
which satisfies
\[
\mat{cc}I&0\\X_-&I\rix^{-1} H_\Delta\mat{cc}I&0\\X_-&I\rix=
\mat{cc|cc}-2&0&1&0\\0&-3&0&1\\\hline -a&-c&2&0\\-c&-b&0&3\rix.
\]
The eigenvalues of $H_\Delta$ satisfy
\[
\lambda^2=\frac12(13-a-b\pm\sqrt{(a-b+5)^2+4c^2}).
\]
The positive semidefiniteness requires $(a,b,c)$ in the cone described by
\[
 a\ge 0,\quad b\ge 0, \quad ab \ge c^2.
\]
All four eigenvalues are real of $H_\Delta$ when $(a,b,c)$ satisfies
\[
13-a-b\ge 0,\quad (a-4)(b-9)\ge c^2.
\]
The vertex of the cone $ (a-4)(b-9)\ge c^2$ is $(4,9,0)$, which is in the plane
$13-a-b=0$. So the two inequalities imply $(a,b,c)$ is inside the cone
and $a\le 4$ and $b\le 9$.
The feasible region is
\[
{\cal R}=\{(a,b,c)|0\le a\le 4,\quad 0\le b\le 9,\quad ab\ge c^2,\quad (a-4)(b-9)\ge c^2\}.
\]
See Figure~\ref{fig1}.
\begin{figure}[ht]
\caption{
 {\em Feasible region ${\cal R}$}
}\label{fig1}
\includegraphics[scale=0.45]{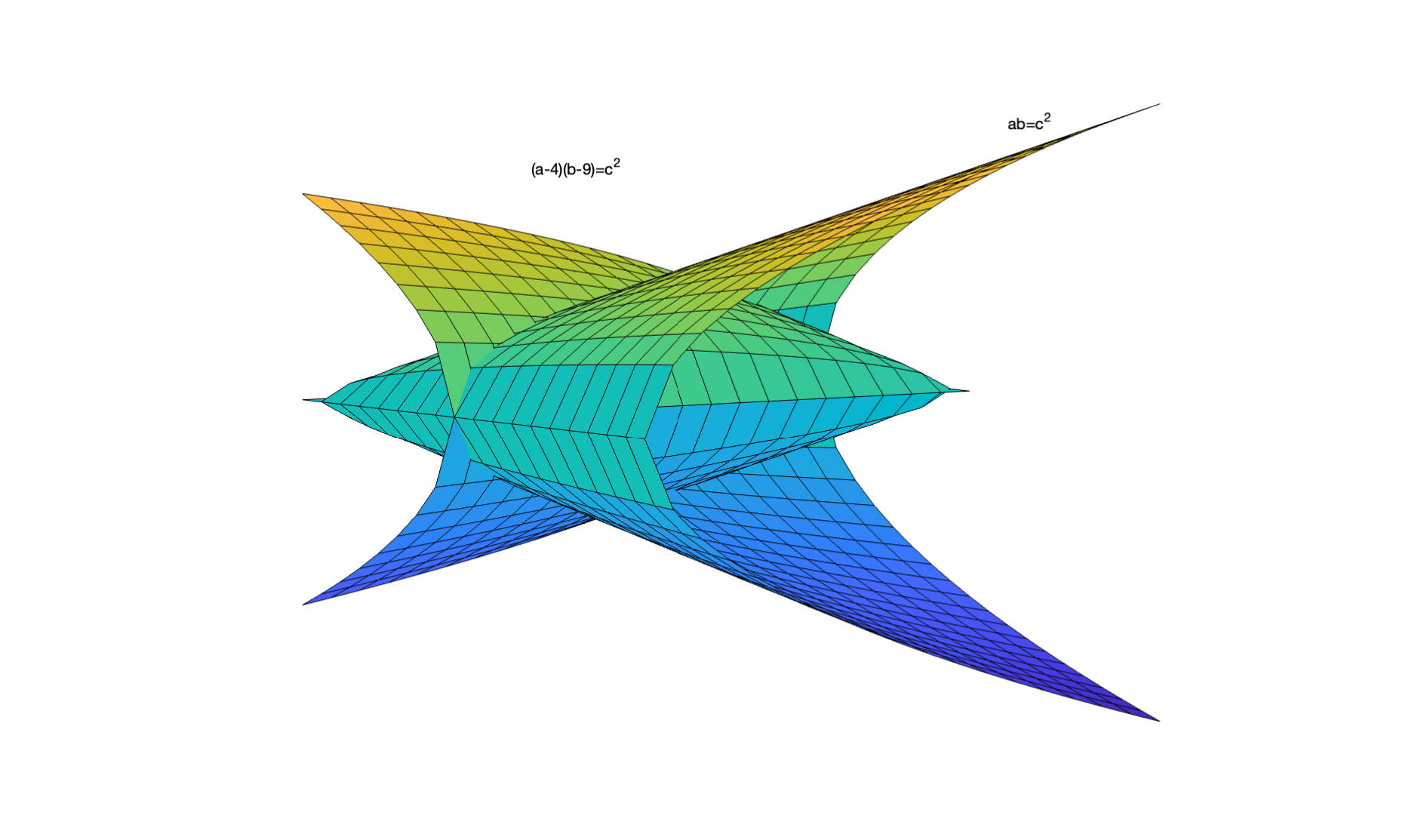}
\end{figure}
(Note the intersection of the two cones is on the plane $36-9a-4b=0$.) Then for any
$\Delta$ with $(a,b,c)\in{\cal R}$, the ARE (\ref{exare}) has Hermitian solutions.
The points in ${\cal R}$ satisfying $(a-4)(b-9)=c^2$ describe a surface.
The matrix $H_\Delta$ has exactly two zero eigenvalues for each $\Delta$
corresponding to such a point except for $(a,b,c)=(4,9,0)$. At this value $(4,9,0)$ all four eigenvalues
of $H_\Delta$ are zero, and the corresponding ARE has a unique solution $\mat{cc}3&1\\1&5\rix$.
\end{example}
}
\section{Conclusion}
We have analyzed the solution sets of the algebraic Riccati inequality
\eqref{ari} and the limiting algebraic Riccati equation~\eqref{are}.
Using eigenvalue perturbation theory we have shown how this set can be characterized via eigenvalue perturbation results for the associated Hamiltonian matrix.
The analysis shows that there is always a perturbation for the solution of the Riccati
inequality that leads to a unique solution where the associated Hamiltonian matrix has all eigenvalues on the imaginary axis and still has a unique Lagrangian invariant subspace.

{The same conclusions can be derived when all the matrices
are real, by paying attention to the  eigenstructure of  real Hamiltonian matrices
and considering  conjugate pairs of complex eigenvalues, e.g., \cite{LinMX99}.
}
{\section*{Acknowledgement}
We thank two anonymous referees and the handling editor for their suggestions that helped to improve the paper substantially.
}
\newpage

\section*{Appendix}
{\em Proof of Lemma~\ref{newlem1}.}
Following the analysis in Subsection~\ref{subs1}, the
perturbed eigenvalues are those of $\wt H_1(t)$ in (\ref{h1}).
Suppose that the single purely imaginary
eigenvalue of $F_{11}$ (as well as $H_1(0)=\mat{cc}F_{11}&G_{11}\\0&-F_{11}^H\rix$)
is $\ii\alpha$. Then $H_1(0)-\ii\alpha I$ is still Hamiltonian but with  single eigenvalue $0$.
Because of this, we may assume that zero is the only eigenvalue of $F_{11}$, i.e., $F_{11}$
is nilpotent.

As shown in \cite{LinMX99}, the matrix
$H_1(0)=\mat{cc}F_{11}&G_{11}\\0&-F_{11}^H\rix$ can be transformed to a
Hamiltonian Jordan form with a symplectic similarity transformation using a block
upper triangular matrix. Applying the transformation to $H_0+tJ\Delta$ on appropriate block rows and columns, the matrix $\Delta $ changes, but a close inspection shows that the controllability
and observability properties remain.
So as in (\ref{sjord}), without loss of generality, we may assume that
\[
F_{11}=\diag(\wh N_1,\ldots,\wh N_k),\quad G_{11}=\diag(G_1,\ldots,G_k),
\]
where, for $j=1,2,\ldots,k$,
\[
\wh N_j={\mat{cccc}0_{s_j}&I_{s_j}&&\\&\ddots&\ddots&\\&&\ddots&I_{s_j}\\
&&&0_{s_j}\rix_{j\times j}},\quad  {G_j=\mat{lcll}0_{s_j}&\ldots&0&0\\\vdots&\ddots&0&0\\
0&\ldots&0_{s_j}&0\\0&\ldots&0&I_{s_j}\rix_{j\times j}}.
\]
This implies that $F_{11}$ has $s_j$ Jordan blocks of order $j$
and $H_1(0)$ has $s_j$ Jordan blocks of order $2j$, for $j=1,\ldots,k$.
Also, the block forms show that $(F_{11},G_{11})$ is controllable.
Note that $\wh N_1$ is a zero matrix
of order $s_1$.
 If $s_j=0$ or $\wh N_j$ is void for some $j\in\{1,\ldots,k\}$, then $F_{11}$ does not have Jordan blocks of such an order. Let $P_0$ be a block permutation matrix such that
\[
P_0^H H_1(0)P_0=\diag\left(\mat{cc}\wh N_1&G_1\\
0&-\wh N_1^{H}\rix,\mat{cc}\wh N_2&G_2\\0&-\wh N_2^H\rix,
\ldots,\mat{cc}\wh N_k&G_k\\0&-\wh N_k^H\rix\right).
\]
With
\[\Sigma_j=\mat{ccccc}0&0&\ldots&0&(-1)^{j-1}I_{s_j}\\0&0&\ldots&(-1)^{j-2}I_{s_j}&0\\
\vdots&\vdots&\adots&\vdots&\vdots\\
0&-I_{s_j}&\ldots&0&0\\
I_{s_j}&0&\ldots&0&0\rix,\quad j=1,2,\ldots,k,
\]
and
\[
P=P_0\diag\left(\mat{cc}I_{s_1}&0\\0&\Sigma_1\rix,\mat{cc}I_{2s_2}&0\\0&\Sigma_2\rix,\ldots,
\mat{cc}I_{ks_k}&0\\0&\Sigma_k\rix\right),
\]
one has
\[
P^H H_1(0)P=\diag(N_{1},N_{2},\ldots,N_{k}),
\]
where
\[
N_{j}=\mat{cccc}0&I_{s_j}&&\\&\ddots&\ddots&\\&&\ddots&I_{s_j}\\
&&&0\rix_{2j\times 2j}
,\quad j=1,\ldots,k.
\]
With this $P$ one has
$
P^H J_1P=\diag\left(\wh \Sigma_1,\wh \Sigma_2,\ldots,\wh \Sigma_k\right)
$,
where
\begin{small}
\[
\wh \Sigma_j=
\mat{cccccc}0&\ldots&0&0&\ldots&(-1)^{j-1}I_{s_j}\\
\vdots&\vdots&\vdots&\vdots&\adots&\vdots\\
0&\ldots&0&I_{s_j}&\ldots&0\\
0&\ldots&-I_{s_j}&0&\ldots&0\\
\vdots&\adots&\vdots&\vdots&\vdots&\vdots\\
(-1)^{j}I_{s_j}&\vdots&0&0&\ldots&0\rix
=(-1)^j\mat{cccc}0&0&\ldots&(-1)^{2j-1}I_{s_j}\\
\vdots&\vdots&\adots&\vdots\\
0&-I_{s_j}&\ldots&0\\
I_{s_j}&0&\ldots&0\rix,
\]
\end{small}
for $j=1,\ldots,k$.
Recalling  that
\[
H_1(t)=H_1(0)+t\mat{cc}\Delta_{31}&\Delta_{33}\\-\Delta_{11}&-\Delta_{31}^H\rix,
\]
we partition
\begin{eqnarray*}
\Delta_{11}&=&\mat{cccc}\Delta^{(11)}_{11}&(\Delta^{(11)}_{21})^H&\ldots&(\Delta^{(11)}_{k1})^H\\
\Delta_{21}^{(11)}&\Delta^{(11)}_{22}&\ldots&(\Delta^{(11)}_{k2})^H\\
\vdots&\vdots&\ddots&\vdots\\
\Delta^{(11)}_{k1}&\Delta^{(11)}_{k2}&\ldots&\Delta^{(11)}_{kk}\rix,\quad
\quad
\Delta_{31}=\mat{cccc}\Delta_{(11)}^{(31)}&\Delta^{(31)}_{12}&\ldots&\Delta^{(31)}_{1k}\\
\Delta_{21}^{(31)}&\Delta^{(31)}_{22}&\ldots&\Delta^{(31)}_{2k}\\
\vdots&\vdots&\ddots&\vdots\\
\Delta^{(31)}_{k1}&\Delta^{(31)}_{k2}&\ldots&\Delta^{(31)}_{kk}\rix,\\
\Delta_{33}&=&\mat{cccc}\Delta^{(33)}_{11}&(\Delta^{(33)}_{21})^H&\ldots&(\Delta^{(33)}_{k1})^H\\
\Delta_{21}^{(33)}&\Delta^{(33)}_{22}&\ldots&(\Delta^{(33)}_{k2})^H\\
\vdots&\vdots&\ddots&\vdots\\
\Delta^{(33)}_{k1}&\Delta^{(33)}_{k2}&\ldots&\Delta^{(33)}_{kk}\rix
\end{eqnarray*}
conformably with the block form of $F_{11}$. Then
\[
P^H H_1(t)P=P^H H_1(0)P+tP^H \mat{cc}\Delta_{31}&\Delta_{33}\\-\Delta_{11}&-\Delta_{31}^H\rix P=P^H H_1(0)P+t\wt \Delta,
\]
where
\begin{tiny}
\[\wt \Delta
=\mat{cc|cc|c|cc}\Delta^{(31)}_{11}&\Delta^{(33)}_{11}\Sigma_1&\Delta^{(31)}_{12}&(\Delta^{(33)}_{21})^H\Sigma_2&\ldots
&\Delta^{(31)}_{1k}&(\Delta^{(33)}_{k1})^H\Sigma_k\\
-\Sigma_1^H \Delta_{11}^{(11)}&-\Sigma_1^H (\Delta_{11}^{(31)})^H\Sigma_1&-\Sigma_1^H (\Delta_{21}^{(11)})^H&
-\Sigma_1^H (\Delta_{21}^{(31)})^H\Sigma_2&\ldots&-\Sigma_1^H (\Delta_{k1}^{(11)})^H&
-\Sigma_1^H (\Delta_{k1}^{(31)})^H\Sigma_k\\
\hline
\Delta^{(31)}_{21}&\Delta^{(33)}_{21}\Sigma_1&\Delta^{(31)}_{22}&\Delta^{(33)}_{22}\Sigma_2&\ldots
&\Delta^{(31)}_{2k}&(\Delta^{(33)}_{k2})^H\Sigma_k\\
-\Sigma_2^H \Delta_{21}^{(11)}&-\Sigma_2^H (\Delta_{12}^{(31)})^H\Sigma_1&-\Sigma_2^H \Delta_{22}^{(11)}&
-\Sigma_2^H (\Delta_{22}^{(31)})^H\Sigma_2&\ldots&-\Sigma_2^H (\Delta_{k2}^{(11)})^H&
-\Sigma_2^H (\Delta_{k2}^{(31)})^H\Sigma_k\\
\hline
\vdots&\vdots&\vdots&\vdots&\ddots&\vdots&\vdots\\
\hline
\Delta^{(31)}_{k1}&\Delta^{(33)}_{k1}\Sigma_1&\Delta^{(31)}_{k2}&\Delta^{(33)}_{k2}\Sigma_2&\ldots
&\Delta^{(31)}_{kk}&\Delta^{(33)}_{kk}\Sigma_k\\
-\Sigma_k^H \Delta_{k1}^{(11)}&-\Sigma_k^H (\Delta_{1k}^{(31)})^H\Sigma_1&-\Sigma_k^H \Delta_{k2}^{(11)}
&-\Sigma_k^H (\Delta_{2k}^{(31)})^H\Sigma_2&\ldots&-\Sigma_k^H \Delta_{kk}^{(11)}&
-\Sigma_k^H (\Delta_{kk}^{(31)})^H\Sigma_k\rix.
\]
\end{tiny}
Partition further
\[
\Delta_{ij}^{(11)}=\mat{cccc}\wh \Delta_{11}^{(ij)}&\wh \Delta_{12}^{(ij)}&\ldots&\wh \Delta_{1j}^{(ij)}\\
\wh \Delta_{21}^{(ij)}&\wh \Delta_{22}^{(ij)}&\ldots&\wh \Delta_{2j}^{(ij)}\\
\vdots&\vdots&\ddots&\vdots\\
\wh \Delta_{i1}^{(ij)}&\wh \Delta_{i2}^{(ij)}&\ldots&\wh \Delta_{ij}^{(ij)}\rix, \quad i,j=1,\ldots,k
\]
and note that $\wh \Delta_{11}^{(11)}=\Delta_{11}^{(11)}$. Recall also that $\Delta_{11}$ is Hermitian and thus $\wh \Delta_{ji}^{(\ell\ell)}=(\wh \Delta_{ij}^{(\ell\ell)})^H$ for  $i,j=1,\ldots,\ell$ and $\ell=1,\ldots,k$.

From the observability of $(F_{11},\Delta_{11})$ it follows
that
\[
\Pi:=\mat{cccc}\wh \Delta_{11}^{(11)}&(\wh \Delta_{11}^{(21)})^H&\ldots&(\wh \Delta_{11}^{(k1)})^H\\
\wh \Delta_{11}^{(21)}&\wh \Delta_{11}^{(22)}&\ldots&(\wh \Delta_{11}^{(k2)})^H\\
\vdots&\vdots&\ddots&\vdots\\
\wh \Delta_{11}^{(k1)}&\wh \Delta_{11}^{(k2)}&\ldots&\wh \Delta_{11}^{(kk)}\rix >0.
\]
The submatrix of $\wt \Delta$ corresponding to the zero block rows and columns of
$P^H H_1(0)P$ is given by
\[
W=\mat{cccc}-\wh \Delta_{11}^{(11)}&-(\wh \Delta_{11}^{(21)})^H&\ldots&-(\wh \Delta_{11}^{(k1)})^H\\
\wh \Delta_{11}^{(21)}&\wh \Delta_{11}^{(22)}&\ldots&(\wh \Delta_{11}^{(k2)})^H\\
\vdots&\vdots&\ddots&\vdots\\
(-1)^k\wh \Delta_{11}^{(k1)}&(-1)^k\wh \Delta_{11}^{(k2)}&\ldots&(-1)^k\wh \Delta_{11}^{(kk)}\rix
=\diag(-I_{s_1},I_{s_2},\ldots,(-1)^kI_{s_k})\Pi.
\]
Define the  principal block submatrices
\[
\Pi_j=\mat{cccc}\wh \Delta_{11}^{(jj)}&(\wh \Delta_{11}^{(j,j+1)})^H&\ldots&(\wh \Delta_{11}^{(jk)})^H\\
\wh \Delta_{11}^{(j+1,j)}&\wh \Delta_{11}^{(j+1,j+2)}&\ldots&(\wh \Delta_{11}^{(k,j+1)})^H\\
\vdots&\vdots&\ddots&\vdots\\
\wh \Delta_{11}^{(kj)}&\wh \Delta_{11}^{(k,j+1)}&\ldots&\wh \Delta_{11}^{(kk)}\rix,
\]
and
\begin{eqnarray*}
W_j&=&\mat{cccc}(-1)^j\wh \Delta_{11}^{(jj)}&(-1)^j(\wh \Delta_{11}^{(j+1,j)})^H&\ldots&(-1)^j(\wh \Delta_{11}^{(kj)})^H\\
(-1)^{j+1}\wh \Delta_{11}^{(j+1,j)}&(-1)^{j+1}\wh \Delta_{11}^{(j+1,j+1)}&\ldots&
(-1)^{j+1}(\wh \Delta_{11}^{(k,j+1)})^H\\
\vdots&\vdots&\ddots&\vdots\\
(-1)^k\wh \Delta_{11}^{(kj)}&(-1)^k\wh \Delta_{11}^{(k,j+1)}&\ldots&(-1)^k\wh \Delta_{11}^{(kk)}\rix\\
&=&\mat{c|ccc}(-1)^j\wh \Delta_{11}^{(jj)}&(-1)^j(\wh \Delta_{11}^{(j+1,j)})^H&\ldots&(-1)^j(\wh \Delta_{11}^{(kj)})^H\\\hline
(-1)^{j+1}\wh \Delta_{11}^{(j+1,j)}&&&\\
\vdots&&W_{j+1}&\\
(-1)^k\wh \Delta_{11}^{(kj)}&&&\rix\\
&=&\diag((-1)^jI_{s_j},\ldots,(-1)^kI_{s_k})\Pi_j,
\end{eqnarray*}
for $j=1,\ldots,k$ with $\Pi_1=\Pi$, $W_1=W$, and $\Pi_k=\wh \Delta_{11}^{(kk)}$,
 $W_k=(-1)^{k}\wh \Delta_{11}^{(kk)}$. Then define the Schur complements
 $\mathcal S_k=W_k=(-1)^k\wh \Delta_{11}^{(k,k)}$,
 \begin{eqnarray*}
\mathcal S_j&=&(-1)^j\left(\wh \Delta_{11}^{(jj)}-\mat{c}
\wh \Delta_{11}^{(j+1,j)}\\\vdots\\\wh \Delta_{11}^{(kj)}\rix^H
W_{j+1}^{-1}\mat{c}(-1)^{j+1}\wh \Delta_{11}^{(j+1,j)}\\\vdots\\
(-1)^k\wh \Delta_{11}^{(kj)}\rix\right)\\
&=&(-1)^j\left(\wh \Delta_{11}^{(jj)}-\mat{c}
\wh \Delta_{11}^{(j+1,j)}\\\vdots\\\wh \Delta_{11}^{(kj)}\rix^H
\Pi_{j+1}^{-1}\mat{c}(\wh \Delta_{11}^{(j+1,j)}\\\vdots\\
\wh \Delta_{11}^{(kj)}\rix\right),\quad j=1,\ldots,k-1.
\end{eqnarray*}
Note that $(-1)^j\mathcal S_j>0$ for all $j=1,\ldots,k$. Then, from  the perturbation results in \cite{Lid66,MorBO97,Xu23a}, for a fixed
$\rho\in \{1,\ldots,k\}$, if $(-1)^\rho\gamma_{1}^{(\rho)},\ldots,(-1)^{\rho}\gamma_{s_\rho}^{(\rho)}$ are the $s_\rho$ eigenvalues of $\mathcal S_\rho$ (with $\gamma_i^{(\rho)}>0$ for $i=1,\ldots,s_\rho$),
then for $t$ sufficiently small, $P^H H_1(t)P$ as well as $H_1(t)$ has $2\rho s_\rho$ eigenvalues
\eq{eigdis}
\lambda^{(\rho)}_{ij}(t)=(t\gamma_i^{(\rho)})^{\frac1{2\rho}}\mu^{(\rho)}_{j}+\mathcal O(t^{\frac1{\rho}}),\quad
i=1,\ldots,s_\rho,\quad j=1,\ldots,2\rho,
\en
where $\mu^{(\rho)}_{1},\ldots,\mu^{(\rho)}_{2\rho}$ are the $2\rho$ roots of
$\mu^{2\rho}=(-1)^{\rho}$,
i.e.,
\[
\mu_{j}^{(\rho)}=\left\{\begin{array}{ll}e^{\frac{(2j-1)\pi \imath}{ 2\rho}}&\rho \mbox{ is odd}\\
e^{\frac{(j-1)\pi\imath}{\rho}}&\rho \mbox{ is even}\end{array}\right.
\qquad j=1,\ldots,2\rho.
\]
Note that for different values of $\rho$, the corresponding eigenvalue sets
are well separated due to the different fractional orders when $t$ is sufficiently small. For a fixed $\rho$,
since $\mathcal S_\rho$ is Hermitian, all the eigenvalues of $\mathcal S_\rho$ are semisimple.
Then all the eigenvalues {$\lambda^{(\rho)}_{ij}$} are semisimple as well, see \cite{Xu23a}.

The set
$\{\mu^{(\rho)}_{j}\}_{j=1}^{2\rho}$ contains exactly two purely imaginary numbers,
namely $\pm \imath$ for  $j = (\rho+1)/2,(3\rho+1)/2$ when $\rho$ is odd and
$j=  \rho/2+1,3\rho/2+1$ when $\rho$ is even. Let us denote the corresponding eigenvalues
of $H_1(t)$ by
\[
\lambda^{(\rho)}_{i,\pm}(t) =\pm \imath(t\gamma_{i}^{(\rho)})^{\frac{1}{ 2\rho}}+\mathcal O(t^{\frac{1}{ \rho}}),
\quad i=1,\ldots,s_\rho.
\]
Consider the eigenvalues $\lambda^{(\rho)}_{i,+}(t)$, $i=1,\ldots,s_\rho$.
Let
\[
{\Omega_{\rho,+}=\diag((\gamma_{1}^{(\rho)})^{\frac{1}{2\rho}},\ldots,
t\gamma_{s_\rho}^{(\rho)})^{\frac{1}{2\rho}})>0}.
\]
Following \cite{Xu23a} and recalling that the order of all Jordan blocks  of
$H_1(0)$ is even, there exists a full column rank matrix $V(t)=\mat{c}V_1(t)\\\vdots\\V_\rho(t)\\\vdots\\ V_k(t)\rix$ such that
\[
P^H H_1(t)PV(t)=V(t)(\imath\Omega_{\rho,+}(t)),\quad\mbox{or}\quad
 H_1(t)PV(t)=PV(t)(\imath\Omega_{\rho,+}(t)),
\]
where {$\Omega_{\rho,+}(t)=t^{\frac1{2\rho}}\Omega_{\rho,+}+\mathcal O(t^{\frac1{\rho}})$} and
\begin{eqnarray*}
V_i(t)&=&\mathcal O\left(\mat{c}t^{1-\frac{2i}{2\rho}}\\t^{1-\frac{2i-1}{2\rho}}\\\vdots\\t^{1-\frac{1}{2\rho}}\rix\right),\quad
i=1,\dots,\rho-1;\\
V_\rho(t)&=&
{\mat{c}Q\\\imath Qt^{\frac1{2\rho}}\Omega_{\rho,+}\\\vdots\\\imath^{2\rho-1} Qt^{\frac{2\rho-1}{2\rho}}\Omega_{\rho,+}^{2\rho-1}\rix}
+\mathcal O\left(\mat{c}t^{\frac1{2\rho}}\\t^{\frac2{2\rho}}\\\vdots\\t\rix\right),\\
V_i(t)&=&\mathcal O\left(\mat{c}t^{0}\\t^{\frac1{2\rho}}\\\vdots\\ t^{1-\frac1{2\rho}}\\t\\\vdots\\t\rix\right),
\quad i=\rho+1,\ldots,k,
\end{eqnarray*}
where $Q$ is a unitary matrix such that
$\mathcal S_\rho Q=(-1)^\rho Q\diag(\gamma_1^{(\rho)},\ldots,\gamma_{s_\rho}^{(\rho)})$, and the notation $\mathcal O(t^{\frac{j}{2\rho}})$ represents the fractional order (in $t$) of a block in the matrix.

Then
\[
(PV(t))^H J_1(PV(t))=\sum_{i=1}^kV_i(t)^H \wh\Sigma_i V_i(t),
\]
and one can easily check that
\[
V_i(t)^H\wh \Sigma_i V_i(t)=\mathcal O(t),\quad i=1,\ldots,\rho-1,\rho+1,\ldots,k,
\]
and
$
V_\rho(t)^H \wh\Sigma_\rho V_\rho(t)=
{2\rho\imath t^{\frac{2\rho-1}{2\rho}}\Omega_{\rho,+}^{2\rho-1}}+\mathcal O(t).
$
%
Hence,
\[
(PV(t))^H J_1(PV(t))={ 2\rho\imath t^{\frac{2\rho-1}{2\rho}}\Omega_{\rho,+}^{2\rho-1}}+\mathcal O(t).
\]
Then for sufficiently small $t>0$, by symmetry it follows that
\[
\imath (PV(t))^H J_1(PV(t))=-{2\rho t^{\frac{2\rho-1}{2\rho}}\Omega_{\rho,+}^{2\rho-1}}+\mathcal O(t)<0.
\]
Suppose that one of the eigenvalues $\lambda_{1,+}^{(\rho)}(t),\ldots,\lambda_{s_\rho,+}^{(\rho)}(t)$ is not purely imaginary, say $\lambda(t)$, with a corresponding right eigenvector $x(t)$, i.e.,
$H_1(t)x(t)=\lambda(t) x(t)$. By the symmetry of the spectrum we have
that $-\bar\lambda(t)$ is also an eigenvalue of $H_1(t)$ and, moreover,
$x(t)^HJ_1x(t)=0$. Apparently, if $\lambda(t)$ is a multiple eigenvalue,
due to the fractional order and eigenvalue distribution (\ref{eigdis}) they must be all in the set $\{\lambda_{j,+}^{(\rho)}(t)\}_{j=1}^{s_\rho}$. This implies
that $x(t)=PV(t)y(t)$ for some $y(t)\ne 0$. Then $x(t)^HJ_1x(t)=0$ implies that
\[
y(t)^H(PV(t))^TJ_1(PV(t))y(t)=0,
\]
contradicting the negative definiteness of $\ii (PV(t))^TJ_1(P(V(t))$.

We conclude that all $\lambda_{1,+}^{(\rho)}(t),\ldots,\lambda_{s_\rho,+}^{(\rho)}(t)$ must be
purely imaginary and the structure inertia index of $H_1(t)$ corresponding to each of them is $-1$. In the same way, one can show that all
$\lambda_{1,-}^{(\rho)}(t),\ldots,\lambda_{s_\rho,-}^{(\rho)}(t)$ are
also purely imaginary and
the structure inertia index
of $H_1(t)$ corresponding to each of them  is $+1$. It is easily verified that $\wt H_1(t)=H_1(t)+\mathcal O(t^2)$. Following \cite{Xu23a} and using the Hamiltonian structure of $\wt H_1(t)$
for each $\rho$, $\wt H_1(t)$ still has $2\rho$ purely imaginary eigenvalues
\[
\wt \lambda_{i,\pm}(t)
\pm\imath (t\gamma_i^{(\rho)})^{\frac1{2\rho}}+\mathcal O(t^{\frac1{\rho}}),
\quad i=1,\ldots,s_\rho,
\]
and their structure inertia indices are exactly the same as those of $\lambda_{i,\pm}(t)$,
$i=1,\ldots,s_\rho$. \qquad $\Box$

\end{document}